\newtheorem{lemma}{Lemma}[section]
\newtheorem{proposition}{Proposition}[section]
\newtheorem{theorem}{Theorem}[section]
\newtheorem{corollary}{Corollary}[section]
\newtheorem{definition}{Definition}[section]
\newcommand{\compl}{\mathbb{C}}
\newcommand{\qp}{\mathbb{C}P}
\newcommand{\ud}{\,\mathrm{d}}
\newcommand{\delb}{\overline{\partial}}
\begin{document}
\title{The homogeneous coordinate ring of the quantum projective plane}
\author {Masoud Khalkhali and Ali Moatadelro\\
 Department of Mathematics, University of Western Ontario\\
 London, Ontario, Canada}
\date{}
\maketitle{}
\begin{abstract}
 We define   holomorphic structures on canonical  line bundles on the quantum projective plane.  The space of
 holomorphic sections of these line bundles  will determine the
 quantum homogeneous coordinate ring of $\qp^2_q$. We also show that the
 holomorphic structure of $\qp^2_q$ is naturally represented by a
 twisted positive Hochschild 4-cocycle. 

\end{abstract}
\begin{center}\tableofcontents
\end{center}
\section{Introduction}
In this paper we continue a study of complex structures on quantum projective spaces that was initiated in \cite{KLS} for
$\qp^1_q$. In the present paper we consider a natural holomorphic structure on the quantum projective plane $\qp^2_q$ already presented in \cite{DDL,DL}, and define  holomorphic structures on canonical quantum line bundles on it. The space of holomorphic sections of these line bundles then will determine the quantum homogeneous coordinate ring of $\qp^2_q$.

In Section \ref{pre}, we review  basic notions of a complex structure on an involutive algebra as well as complex
structures on modules and bimodules over such algebra from \cite{KLS}. In Section \ref{qp2}, we recall the definition of
the  quantum projective plane $\qp^2_q$ \cite{DDL}, and its canonical line bundles. In Section \ref{Cstr}, we introduce
a flat $\delb$-connection on the bimodules representing canonical quantum line bundles on $\qp^2_q$ . We also
establish the compatibility
of these connections   with the natural tensor product of these bimodules. This compatibility is then used to derive the
structure of the quantum
homogeneous coordinate ring of $\qp^2_q$ as a twisted polynomial algebra in three variables. In Section \ref{holcts} we extend the results of Section 4 to $L^2$-functions and $L^2$-sections.  

In the last section, using the complex structure on $\qp^2_q$, we give a formula for a twisted Hochschild 4-cocycle on
$\mathcal A(\qp^2_q)$ cohomologous to its fundamental cyclic 4-cocycle which is  originally defined via its
smooth structure. We also show that this cocycle is twisted positive in an appropriate sense \cite{KLS}.
This fits well with
the point of view on holomorphic structures in noncommutative geometry advocated in \cite{C1,C2}.

\section{Preliminaries}\label{pre}
In this section we review the general setup of noncommutative complex structure on
a given $\ast$-algebra as introduced in \cite{KLS}.
\subsection{Noncommutative complex structures}
Let $\mathcal A$ be a $\ast$-algebra over $\mathbb C$. A \textit{differential $\ast$-calculus} for
 $\mathcal A$ is a pair $(\Omega^{\bullet}(\mathcal A),\ud)$, where
$\Omega^{\bullet}(\mathcal A)=\bigoplus_{n \geq 0}\Omega^n(\mathcal A)$
 is a graded differential $\ast$-algebra with $\Omega^0(\mathcal A)=\mathcal A$. The differential map
 $\ud:\Omega^n(\mathcal A)\rightarrow \Omega^{n+1}(\mathcal A)$ satisfying the graded Leibniz rule,
 $\ud(\omega_1\omega_2)=(\ud\omega_1)\omega_2+(-1)^n\omega_1(\ud\omega_2)$ and
 $\ud^2=0$. The differential also anti commutes with the $\ast$-structure:
 $\ud(a^*)=-(\ud a)^*$.
\begin{definition}
A complex structure on an algebra $\mathcal A$, equipped with a differential calculus
 $(\Omega^{\bullet}(\mathcal A),\ud)$,
is a bigraded differential $\ast$-algebra $\Omega^{(\bullet,\bullet)}(\mathcal A)$
and two differential
maps $\partial:\Omega^{(p,q)}(\mathcal A)\rightarrow \Omega^{(p+1,q)}(\mathcal A)$ and
$\delb:\Omega^{(p,q)}(\mathcal A)\rightarrow \Omega^{(p,q+1)}(\mathcal A)$ such that:
\begin{eqnarray}
\Omega^n(\mathcal A)=\bigoplus_{p+q=n}\Omega^{(p,q)}(\mathcal A)\,,\quad \partial a^*=-(\delb a)^*\,,
 \quad \ud=\partial+\delb.
\end{eqnarray}
Also, the involution $\ast$ maps $\Omega^{(p,q)}(\mathcal A)$ to $\Omega^{(q,p)}(\mathcal A)$.
\end{definition}
We will use the simple notation $(\mathcal A,\delb)$ for a complex structure on $\mathcal A$.
\begin{definition}
Let $(\mathcal A,\delb)$ be an algebra with a complex structure. The space of holomorphic
elements of $\mathcal A$ is defined as
\begin{equation}
\mathcal O(\mathcal A):=Ker\{\delb:\mathcal A\rightarrow\Omega^{(0,1)}(\mathcal A)\}.\nonumber
\end{equation}
\end{definition}
\subsection{Holomorphic connections}
Suppose we are given a differential calculus $(\Omega^{\bullet}(\mathcal A),\ud)$. We recall that a
connection on a left $\mathcal A$-module $\mathcal E$ for the differential
calculus $(\Omega^{\bullet}(\mathcal A),\ud)$ is a linear map
$\nabla:\mathcal E\rightarrow\Omega^1(\mathcal A)\otimes_{\mathcal A} \mathcal E$
with left Leibniz property:
\begin{equation}\label{left Leibniz}
\nabla(a\xi)=a\nabla \xi+\ud a \otimes_{\mathcal A} \xi,\quad \forall a\in \mathcal A,\,
\forall \xi \in \mathcal E.
\end{equation}
By the graded Leibniz rule, i.e.
\begin{equation}
\nabla(\omega\xi)=(-1)^n\omega \nabla \xi+\ud \omega \otimes \xi,
 \quad \forall\omega \in \Omega^n(\mathcal A),\, \forall \xi \in \Omega(\mathcal A)\otimes_{\mathcal A} \mathcal E,
\end{equation}
this connection can be uniquely extended to a map, which will be denoted again by $\nabla$, $\nabla:
\Omega^{\bullet}(\mathcal A) \otimes_{\mathcal A} \mathcal E
\rightarrow \Omega^{\bullet+1}(\mathcal A) \otimes_{\mathcal A} \mathcal E$.

The curvature of such a connection is defined by $F_\nabla=\nabla \circ \nabla$.
One can show that, $F_\nabla$ is an element of
Hom$_\mathcal A (\mathcal E,\Omega^2(\mathcal A)\otimes_{\mathcal A} \mathcal E)$.
\begin{definition}\label{holomorphic vect bundle}
Suppose $(\mathcal A,\delb)$ is an algebra with a complex structure.
 A holomorphic structure on a left $\mathcal A$-module
$\mathcal E$ with respect to this complex structure is given by a linear map
$\nabla^{\delb}:\mathcal E \rightarrow \Omega^{(0,1)} \otimes_{\mathcal A} \mathcal E$ such that
\begin{eqnarray}
&&\nabla^{\delb}(a\xi)=a\nabla^{\delb}\xi+\delb a \otimes_{\mathcal A} \xi, \quad \forall
a \in \mathcal A,\, \forall \xi \in \mathcal E,
\end{eqnarray}
and such that $F_{\nabla^{\delb}}=(\nabla^{\delb})^2=0$.
\end{definition}
Such a connection will be called a flat $\delb$-connection. In the case which $\mathcal E$ is a finitely generated $\mathcal A$-module, $(\mathcal E,\nabla^{\delb})$ will be called a holomorphic vector bundle.

Associated to a flat $\delb$-connection, there exists a complex of vector spaces
\begin{equation}
0\rightarrow \mathcal E \rightarrow \Omega^{(0,1)} \otimes_{\mathcal A} \mathcal E
 \rightarrow \Omega^{(0,2)} \otimes_{\mathcal A} \mathcal E\rightarrow...
\end{equation}
Here $\nabla^{\delb}$ is extended to $\Omega^{(0,q)} \otimes_{\mathcal A} \mathcal E$
by the graded Leibniz rule. The zeroth cohomology group of this complex is called the space of Holomorphic sections of
$\mathcal E$ and will be denoted by $H^0(\mathcal E,\nabla^{\delb})$.
\subsection{Holomorphic structures on bimodules}
\begin{definition}
Let $\mathcal A$ be an algebra with a differential calculus $(\Omega^{\bullet}(\mathcal A),\ud)$.
A bimodule connection on an $\mathcal A$-bimodule $\mathcal E$ is given by a connection $\nabla$
 which satisfies a left Leibniz rule as in formula ({\ref{left Leibniz}})
and a right $\sigma$-twisted Leibniz property with respect to a bimodule isomorphism
$\sigma:\mathcal E\otimes_{\mathcal A} \Omega^1(\mathcal A) \rightarrow
\Omega^1(\mathcal A)\otimes_{\mathcal A} \mathcal E$. i.e.
 \begin{equation}
\nabla(\xi a)=\nabla \xi a+\sigma(\xi\otimes \ud a)\,, \quad\forall \xi \in\mathcal E,\,
\forall a \in \mathcal A.
\end{equation}
\end{definition}
The tensor product connection of two bimodule connections $\nabla_1$ and $\nabla_2$
on two $\mathcal A$-bimodules $\mathcal E_1$ and $\mathcal E_2$ with respect to the bimodule isomorphisms
$\sigma_1$ and $\sigma_2$ is a map $\nabla:\mathcal E_1\otimes_{\mathcal A} \mathcal E_2\rightarrow
\Omega^1(\mathcal A)\otimes_{\mathcal A}\mathcal E_1\otimes_{\mathcal A} \mathcal E_2$ defined by
\begin{equation}
\nabla:=\nabla_1 \otimes 1 +(\sigma_1\otimes 1)(1\otimes \nabla_2).\nonumber
\end{equation}
It can be checked that, $\nabla$ has the right $\sigma$-twisted property with
$\sigma:\mathcal E_1\otimes \mathcal E_2\otimes \Omega^1(\mathcal A)\rightarrow
\Omega^1(\mathcal A)\otimes\mathcal E_1\otimes \mathcal E_2$ given by
$\sigma=(\sigma_1\otimes 1)\circ(1\otimes\sigma_2)$.
\section{The quantum projective plane $\qp^2_q$}\label{qp2}
In this section, we recall the definition of the quantum enveloping algebra
$U_q(\mathfrak{su}(3))$, the quantum group $\mathcal A(SU_q(3))$ and the pairing between them. We also recall the definition of the quantum projective plane $\qp^2_q$ and its canonical quantum line bundles \cite{DDL}.
\subsection {The quantum enveloping algebra $U_q(\mathfrak{su}(3))$}
Let $0<q<1$. We use the following notation
\begin{align*}
[a, b]_q=ab-q^{-1}ba,\,[z]=\frac{q^z-q^{-z}}{q-q^{-1}},\,\begin{bmatrix}n\\m\end{bmatrix}&=\frac{[n]!}{[m]![n-m]!}\,,\\
[j,k,l]!=q^{-(jk+kl+lj)}\frac{[j+k+l]!}{[j]![k]![l]!}\,.&
\end{align*}

The Hopf $\ast$-algebra $U_q(\mathfrak{su}(3))$ as a $\ast$-algebra is generated by
$K_i,K^{-1}_i,E_i,F_i$, $i=1,2$ with $K_i^*=K_i$, $E_i^*=F_i$
subject to the relations
\begin{eqnarray}
&&[K_i,K_j]=0 , \quad K_iE_i=qE_iK_i , \quad [E_i,F_i]=
(q-q^{-1})^{-1}(K_i^2-K_i^{-2}),\nonumber\\
&&K_iE_j=q^{-1/2}E_jK_i , \quad[E_i,F_j]=0 , \quad i\neq j\,,\nonumber
\end{eqnarray}
and
\begin{equation}
E_i^2E_j+E_jE_i^2=(q+q^{-1})E_iE_jE_i \quad i\neq j\,. \nonumber
\end{equation}
Its coproduct, counit and antipode are defined on generators as
\begin{eqnarray}
&&\Delta(E_i)=E_i\otimes K_i+K_i^{-1}\otimes E_i,
\quad \Delta(F_i)=F_i\otimes K_i+K_i^{-1}\otimes F_i,\nonumber\\
&&\Delta(K_i)=K_i\otimes K_i,\quad \epsilon(K_i)=1,\quad \epsilon
(E_i)=\epsilon(F_i)=0,\nonumber\\
&&S(K_i)=K_i^{-1},\quad S(E_i)=-qE_i, \quad S(F_i)=-q^{-1}F_i.\nonumber
\end{eqnarray}

Let $V(n_1,n_2)$ be the irreducible finite dimensional $*$-representation
of $U_q(\mathfrak{su}(3))$ \cite{KS} with the orthonormal basis $|n_1,n_2,j_1,j_2,m\rangle$,
where indices are restricted by\\
\begin{equation}\label{j indices}
j_i=0,1,2,...,n_i, \quad \frac{1}{2}(j_1+j_2)-|m|\in\mathbb N.
\end{equation}
The generators of $U_q(\mathfrak{su}(3))$ act on this basis as
\begin{eqnarray}\label{action on basis}
&&K_1|n_1,n_2,j_1,j_2,m\rangle=q^m|n_1,n_2,j_1,j_2,m\rangle,\nonumber\\
&&K_2|n_1,n_2,j_1,j_2,m\rangle=q^{\frac{3}{4}(j_1-j_2)+\frac{1}{2}(n_2-n_1-m)}|n_1,n_2,j_1,j_2,m\rangle,
\nonumber\\
&&E_1|n_1,n_2,j_1,j_2,m\rangle=\sqrt{[\frac{1}{2}(j_1+j_2)-m][\frac{1}{2}(j_1+j_2)+m+1]}
\nonumber\\
&&\qquad\qquad\qquad|n_1,n_2,j_1,j_2,m+1\rangle,\nonumber\\
&&E_2|n_1,n_2,j_1,j_2,m\rangle=\sqrt{[\frac{1}{2}(j_1+j_2)-m+1]}A_{j_1,j_2}
|n_1,n_2,j_1+1,j_2,m-\frac{1}{2}\rangle\nonumber\\
&&\qquad+\sqrt{[\frac{1}{2}(j_1+j_2)+m]}B_{j_1,j_2}|n_1,n_2,j_1,j_2-1,m-\frac{1}{2}\rangle,
\end{eqnarray}
where
\begin{eqnarray}\label{Aij Bij}
&&A_{j_1,j_2}:=\sqrt{\frac{[n_1-j_1][n_2+j_1+2][j_1+1]}{[j_1+j_2+1][j_1+j_2+2]}},
\\
&&B_{j_1,j_2}:=
\begin{cases}
\sqrt{\frac{[n_1+j_2+1][n_2-j_2+1][j_2]}{[j_1+j_2][j_1+j_2+1]}} &
 \text {if} \, j_1+j_2\neq 0,\\
1 & \text {if}\, j_1+j_2=0.
\end{cases}
\end{eqnarray}
\subsection {The quantum group $\mathcal A(SU_q(3))$}\label{qgsuq3}
As a $\ast$-algebra, $\mathcal A(SU_q(3))$ is generated by $u^i_j$, $i,j=1,2,3$,
satisfying the following commutation relations
\begin{align*}
&u^i_ku^j_k=qu^j_ku^i_k, &u^k_iu^k_j=qu^k_ju^k_i \quad \forall\, i<j,&\\
&[u^i_l,u^j_k]=0, &[u^i_k,u^j_l]=(q-q^{-1})u^i_lu^j_k &\quad \forall\, i<j,\,k<l,
\end{align*}
and a cubic relation
\begin{eqnarray}
\sum_{\sigma \in S_3}(-q)^{l(\sigma)}u^1_{\sigma(1)}u^2_{\sigma(2)}u^3_{\sigma(3)}=1.\nonumber
\end{eqnarray}
In the last equation, sum is taken over all permutation $\sigma$ on three letters and $l(\sigma)$
 is the length of $\sigma$. The involution $*$ is deafened as
\begin{eqnarray}
(u^i_j)^*:=(-q)^{j-i}(u^{k_1}_{l_1}u^{k_2}_{l_2}-qu^{k_1}_{l_2}u^{k_2}_{l_1}),
\end{eqnarray}
where as an ordered set, $\{k_1,k_2\}=\{1,2,3\}\setminus\{i\}$ and $\{l_1,l_2\}=\{1,2,3\}\setminus\{j\}$.
The Hopf algebra structure is given by
\begin{eqnarray}
\Delta(u^i_j)=\sum_k u^i_k\otimes u^k_j,\quad \epsilon(u^i_j)=\delta^i_j,\quad
S(u^i_j)=(u^j_i)^*. \nonumber
\end{eqnarray}

There exists a non-degenerate pairing between Hopf algebras $\mathcal A(SU_q(3))$ and $U_q(\mathfrak{su}(3))$,
which allows us to define a left and a right action of $U_q(\mathfrak{su}(3))$ on $\mathcal A(SU_q(3))$.
These actions make $\mathcal A(SU_q(3))$ an $U_q(\mathfrak{su}(3))$-bimodule $\ast$-algebra.\\
The actions are defined as
\begin{eqnarray}
h\triangleright a=a_{(1)}\langle h,a_{(2)}\rangle, \quad a\triangleleft h=\langle h,a_{(1)}\rangle a_{(2)}.\nonumber
\end{eqnarray}
Here we used Sweedler's notation. Left and right actions on generators are given by
(see \cite{DL})
\begin{align}\label{lr actions}
& K_i \triangleright u^j_k=q^{\frac{1}{2}(\delta_{i+1,k}-\delta_{i,k})}u^j_k,
\quad E_i \triangleright u^j_k=\delta_{i,k}u^j_{i+1},
\quad F_i \triangleright u^j_k=\delta_{i+1,k}u^j_{i},\nonumber\\
& u^j_k \triangleleft K_i=q^{\frac{1}{2}(\delta_{i+1,j}-\delta_{i,j})}u^j_k,
\quad u^j_k \triangleleft E_i=\delta_{i+1,j}u^i_k,
\quad u^j_k \triangleleft F_i=\delta_{i,j}u^{i+1}_{k}.
\end{align}

A linear basis of $\mathcal A(SU_q(3))$ corresponding to the Peter-Weyl decomposition is given by (see \cite{DL,DDL})
\begin{equation}\label{t basis}
t(n_1,n_2)^{l_1,l_2,k}_{j_1,j_2,m}:=
X^{n_1,n_2}_{j_1,j_2,m}
\triangleright \{(u^1_1)^*\}^{n_1}(u^3_3)^{n_2}\triangleleft (X^{n_1,n_2}_{l_1,l_2,k})^*.
\end{equation}
where 
$X^{n_1,n_2}_{j_1,j_2,m}$ is defined as
\begin{eqnarray}
&&X^{n_1,n_2}_{j_1,j_2,m}:=
N^{n_1,n_2}_{j_1,j_2,m}\nonumber\\&&\sum_{k=0}^{n_1-j_1}
\frac{q^{-k(j_1+j_2+k+1)}}{{[j_1+j_2+k+1]!}}
\begin{bmatrix}n_1-j_1\\
k\end{bmatrix}F_1^{1/2(j_1+j_2)-m+k}
[F_2,F_1]_q^{n_1-j_1-k}F_2^{j_2+k}.\nonumber
\end{eqnarray}
The coefficients $N^{n_1,n_2}_{j_1,j_2,m}$ are defined by
\begin{equation}
N^{n_1,n_2}_{j_1,j_2,m}=\sqrt{[j_1+j_2+1]}\sqrt{\frac{[\frac{j_1+j_2}{2}+m]!
[n_2-j_2]![j_1]![n_1+j_2+1]![n_2+j_1+1]!}{[\frac{j_1+j_2}{2}-m]![n_1-j_1]![j_2]!
[n_1]![n_2]![n_1+n_2+1]!}}.\nonumber
\end{equation}

The Peter-Weyl isomorphism $Q:\mathcal A(SU_q(3))\rightarrow\bigoplus_{\substack{(n_1,n_2)}} V(n_1,n_2)\otimes V(n_1,n_2)$
has the following property for all $h\in U_q(\mathfrak{su}(3))$:
\begin{align}\label{Q iso}
Q(h\triangleright t(n_1,n_2)^{l_1,l_2,k}_{j_1,j_2,m}) &=h|n_1,n_2,j_1,j_2,m\rangle\otimes |n_1,n_2,l_1,l_2,k\rangle,\nonumber\\
Q(t(n_1,n_2)^{l_1,l_2,k}_{j_1,j_2,m}\triangleleft h) &=|n_1,n_2,j_1,j_2,m\rangle\otimes\, \theta(h)|n_1,n_2,l_1,l_2,k\rangle,
\end{align}
where $\theta:U_q(\mathfrak{su}(3))\rightarrow U_q(\mathfrak{su}(3))^{op}$ is the Hopf $*$-algebra isomorphism which is defined on generators as
\begin{equation}
\theta(K_i)=K_i,\,\theta(E_i)=F_i,\,\theta(F_i)=E_i,\nonumber
\end{equation}
and satisfying $\theta^2=id$.

We define the quantum projective plane $\qp^2_q$ as a quotient of the 5-dimensional quantum sphere (\cite{DDL}). By definition
\begin{equation}
\mathcal A(S^5_q):=\{a \in \mathcal A(SU_q(3)) |\,a \triangleleft h=\epsilon(h)a \,,\,\forall h\in U_q(\mathfrak{su}(2))\}.\nonumber
\end{equation}
As a $\ast$-algebra, $\mathcal A(S^5_q)$ is generated by elements $ z_j=u^3_j,\,j=1,2,3$ of $\mathcal A(SU_q(3))$. Abstractly, this algebra is the algebra with generators $z_i,z_i^*$ $i=1,2,3$ and subject to the following relations
\begin{align*}
&z_iz_j=qz_jz_i\quad \forall\, i<j, &z^*_iz_j=qz_jz^*_i,\quad\forall\, i\neq j,\\
&[z_1^*,z_1]=0, &[z_2^*,z_2]=(1-q^2)z_1z^*_1,\\
&[z_3,z_3]=(1-q^2)(z_1z_1^*+z_2z_2^*),
&z_1z_1^*+z_2z_2^*+z_3z_3^*=1.
\end{align*}

Now we define the algebra $\mathcal A(\qp^2_q)$ of the quantum projective plane as a $\ast$-subalgebra of $\mathcal A(S^5_q)$.
\begin{equation}
\mathcal A(\qp^2_q):=\{a\in \mathcal A(S^5_q)|\, a\triangleleft K_1K_2^2=a\}.\nonumber
\end{equation}
One can show that \cite{DDL}, $\mathcal A(S^5_q)\simeq\bigoplus_{(n_1,n_2)\in\mathbb N^2} V(n_1,n_2)$
with the basis $t(n_1,n_2)^{\underline 0}_{\underline j}$, where $n_1$ and $n_2$ are non-negative integers.
 Also $\mathcal A(\qp^2_q)\simeq\bigoplus_{n\in\mathbb N}V(n,n)$ with the basis $t(n,n)^{\underline 0}_{\underline j}$.
Here we have used the multi index notation $\underline j=j_1,j_2,m$ and indices $j_1,j_2,m$ are restricted by (\ref{j indices}).

For any integer $N$, we define the space of the canonical quantum line bundle $\mathcal L_N$ on $\qp^2_q$ by
\begin{equation}
\mathcal L_N:=\{a\in \mathcal A(S^5_q) : a \triangleleft K_1K_2^2=q^Na\}.\nonumber
\end{equation}
These spaces are $\mathcal A(\qp^2_q)$-bimodules. One can see that \cite{DDL},
\begin{eqnarray}
\mathcal L_N=\bigoplus_{n\in \mathbb N}V(n,n+N)\quad\text{if}\,\,N\geq 0,\,\,\text{and}\,\,
\mathcal L_N=\bigoplus_{n \in \mathbb N}V(n-N,n)\quad\text{if}\,\, N<0.\nonumber
\end{eqnarray}
The basis elements are given by $t(n,n+N)^{\underline 0}_{\underline j}$ for $N\geq 0$ and $t(n-N,n)^{\underline 0}_{\underline j}$ for $N<0$.
\section{The complex structure of $\qp^2_q$}\label{Cstr}
There is a complex structure on $\qp^2_q$ defined in \cite{DL,DDL}.
For future use, we give an explicit description of the spaces $\Omega^{(0,0)}$, $\Omega^{(0,1)}$ and $\Omega^{(0,2)}$:
\begin{equation}
\Omega^{(0,0)}=\mathcal L_0=\mathcal A(\qp^2_q), \quad \Omega^{(0,2)}=\mathcal L_3,\nonumber
\end{equation}
and as a subspace of $\mathcal A(SU_q(3))^2$, $\Omega^{(0,1)}$ contains all pairs
$(v_+,v_-)$ such that the following conditions hold
\begin{align}\label{omega01}
&(v_+,v_-)\triangleleft K_1K_2^2=q^{\frac{3}{2}}(v_+,v_-),&
&(v_+,v_-)\triangleleft K_1=(q^{\frac{1}{2}}v_+,q^{-\frac{1}{2}}v_-),\nonumber\\
&(v_+,v_-)\triangleleft F_1=(0,v_+),&
&(v_+,v_-)\triangleleft E_1=(v_-,0).
\end{align}
The complex structure on $\qp^2_q$ is given by the maps
$\partial:\mathcal A(\qp^2_q)\rightarrow\Omega^{(1,0)}(\qp^2_q)$ and
$\delb:\mathcal A(\qp^2_q)\rightarrow\Omega^{(0,1)}(\qp^2_q)$, which (up to multiplicative constants) are
$\partial a=(a\triangleleft E_2 , a \triangleleft F_2E_1)^t$,
$\delb a=(a\triangleleft F_2F_1 , a \triangleleft F_2)^t$.

In this section we identify the space of holomorphic functions on $\qp^2_q$ and holomorphic sections of $\mathcal L_N$.
\subsection{Holomorphic functions}
\begin{proposition}\label{holfunprop}
There are no non-trivial holomorphic polynomials on $\qp^2_q$.
\end{proposition}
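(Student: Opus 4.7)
The plan is to exploit the Peter--Weyl decomposition $\mathcal A(\qp^2_q)\simeq\bigoplus_{n\geq 0} V(n,n)$ together with the explicit formula $\delb a=(a\triangleleft F_2F_1,\,a\triangleleft F_2)^t$ in order to reduce the vanishing of $\delb$ to a one-line computation in the representation theory of $U_q(\mathfrak{su}(3))$. The constants already exhaust the $n=0$ summand $V(0,0)=\mathbb C\cdot 1$, so the content of the statement is that $\delb$ is injective on each summand $V(n,n)$ with $n\geq 1$.

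First, I would decompose a generic polynomial as $a=\sum_n a_n$ with $a_n=\sum_{\underline j} c_{\underline j}\,t(n,n)^{\underline 0}_{\underline j}$ in the $V(n,n)$ block. Because the right action of $U_q(\mathfrak{su}(3))$ preserves the Peter--Weyl isotypic components of $\mathcal A(SU_q(3))$, both entries of $\delb$ are block-diagonal in $n$, so $\delb a=0$ reduces to $\delb a_n=0$ separately for every $n$. It therefore suffices to fix a single $n\geq 1$ and derive a contradiction from $\delb a_n=0$ and $a_n\neq 0$.

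Next, I would apply formula (\ref{Q iso}) to translate the right action by any $h\in U_q(\mathfrak{su}(3))$ into an action of $\theta(h)$ on the distinguished vector $|n,n,0,0,0\rangle$. By linearity one obtains
$$Q(a_n\triangleleft h)=\Big(\sum_{\underline j} c_{\underline j}\,|n,n,j_1,j_2,m\rangle\Big)\otimes\theta(h)|n,n,0,0,0\rangle,$$
a pure tensor whose first factor is non-zero exactly when $a_n\neq 0$. Hence $a_n\triangleleft h=0$ is equivalent to $\theta(h)|n,n,0,0,0\rangle=0$. Taking $h=F_2$ in the second entry of $\delb$ and using $\theta(F_2)=E_2$, the hypothesis $\delb a_n=0$ forces $E_2|n,n,0,0,0\rangle=0$.

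Finally, substituting $j_1=j_2=m=0$ into (\ref{action on basis}) and (\ref{Aij Bij}) collapses the general expression to
$$E_2|n,n,0,0,0\rangle=A_{0,0}\,|n,n,1,0,-\tfrac{1}{2}\rangle,\qquad A_{0,0}=\sqrt{\tfrac{[n][n+2]}{[2]}},$$
the $B_{0,0}$-summand dropping out because of the prefactor $\sqrt{[0]}=0$. For $n\geq 1$ the quantum integer $[n]$ is positive, so $A_{0,0}\neq 0$ and $E_2|n,n,0,0,0\rangle\neq 0$, which contradicts $\delb a_n=0$. I do not foresee any serious obstacle in this argument; the only point demanding care is the bookkeeping around the anti-isomorphism $\theta$ in (\ref{Q iso}), in order to be sure that the second entry of $\delb$ really corresponds to the raising operator $E_2$ (and not to $F_2$) acting on $|n,n,0,0,0\rangle$.
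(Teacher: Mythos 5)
Your argument is correct and follows essentially the same route as the paper: both reduce $\delb a=0$ to the single condition $a\triangleleft F_2=0$ in the Peter--Weyl basis, translate it via (\ref{Q iso}) and $\theta(F_2)=E_2$ into $E_2|n,n,0,0,0\rangle=A_{0,0}|n,n,1,0,-\tfrac{1}{2}\rangle$, and conclude from $A_{0,0}=\sqrt{[n][n+2]/[2]}\neq 0$ for $n\geq 1$ that only the constants survive. Your block-by-block pure-tensor phrasing is just a slightly more explicit organization of the same computation.
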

\begin{proof}
Let $a=\sum_{n,\underline j} \lambda_{n,\underline j}t(n,n)^{\underline 0}_{\underline j}$. Then $\delb a=0$ implies that
$a\triangleleft F_2=0$ and $a \triangleleft F_2F_1=0$.
A simple computation shows that
$a\triangleleft F_2=\sum \lambda_{n,\underline j}\gamma_nt(n,n)^{1,0,-\frac{1}{2}}_{\underline j}$, where
$\gamma_n=A_{0,0}=\sqrt{\frac{[n][n+2]}{[2]}}$.
This can be obtained by (\ref{Q iso}), (\ref{action on basis}) and (\ref{Aij Bij}) because\\
$E_2|n,n,0,0,0\rangle=A_{0,0}|n,n,1,0,-\frac{1}{2}\rangle.$\\
Since
$\gamma_n=0$ iff $n=0$,
all coefficients need to be zero except $c_{0,0}$. Note that the action of $F_1$
does not put more restrictions on the coefficients. This demonstrates that
\begin{eqnarray}
Ker\{\delb : \mathcal A(\qp^2_q)\rightarrow\Omega^{(0,1)}(\qp^2_q)\}=\langle t(0,0)_{\underline 0}^{\underline 0}\rangle=\mathbb{C}.\nonumber
\end{eqnarray}
\end{proof}
This preposition, already has been proved in \cite{DDL} as a result of a Hodge decomposition.
\subsection{Canonical line bundles}
Like \cite{DL}, we define the connection $\nabla_N$ on $\mathcal L_N$ by $\nabla_N:=q^{-N}\Psi_N^\dagger\ud \Psi_N$, where $\Psi_N$ is the column vector with components $\psi^N_{i,j,k}$ given by
\begin{align*}
&(\psi^N_{j,k,l})^*=\sqrt{[j,k,l]!}z_1^{j}z_2^{k}z_3^{l},&\text{if}\,
N\geq 0 \quad\text{and with}\,j+k+l=N,\\
&(\psi^N_{j,k,l})^*=\sqrt{[j,k,l]!}(z_1^{j}z_2^{k}z_3^{l})^*,&\text{if}\,
N\leq 0\quad \text {and with}\,i+j+k=-N.
\end{align*}
Notice that we put an extra coefficient $q^{-N}$. This is needed for compatibility with the twist map in section (\ref{bimod connection}).

The anti holomorphic part of this connection will be
 $\nabla_N^{\delb}=q^{-N}\Psi_N^\dagger\delb\Psi_N$.
The curvature of $\nabla_N^{\delb}$ can be computed as follows
\begin{eqnarray}
(\nabla_N^{\delb})^2=q^{-2N}\Psi_N^\dagger (\delb P_N\delb P_N)\Psi_N,
\nonumber
\end{eqnarray}
where $P_N:=\Psi_N\Psi_N^{\dagger}$ is a projection map due to the fact that
$\Psi_N^{\dagger}\Psi_N=1$.
\begin{proposition}\label{flatness}
The connection $\nabla^{\delb}_N$ is flat.
\end{proposition}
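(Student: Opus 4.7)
The plan is to combine the formula
\[
(\nabla_N^{\delb})^2 = q^{-2N}\Psi_N^\dagger (\delb P_N\,\delb P_N)\Psi_N
\]
already recorded in the text with the observation that one of the two vectors $\Psi_N$, $\Psi_N^\dagger$ consists entirely of $\delb$-closed entries, so that $(\delb P_N)^2$ collapses to zero.

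First I would verify that $\delb z_k = 0$ for $k=1,2,3$. Since $z_k = u^3_k$ and the right action of $U_q(\mathfrak{su}(3))$ on generators gives $u^j_k \triangleleft F_i = \delta_{i,j}\,u^{i+1}_k$, one obtains $z_k \triangleleft F_1 = z_k \triangleleft F_2 = 0$, and consequently $\delb z_k = (z_k \triangleleft F_2 F_1,\, z_k \triangleleft F_2)^t = 0$. Extending $\delb$ to $\mathcal A(S^5_q)$ as a graded derivation (as it already is on $\mathcal A(\qp^2_q)$ as part of the complex structure), every polynomial in $z_1, z_2, z_3$ is thus $\delb$-closed. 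Inspecting the formulas for the $\psi^N_{j,k,l}$: for $N \geq 0$ the components of $\Psi_N^\dagger$ are of the form $\sqrt{[j,k,l]!}\,z_1^j z_2^k z_3^l$, so $\delb \Psi_N^\dagger = 0$; for $N \leq 0$ the symmetric argument (with $\Psi_N$ replacing $\Psi_N^\dagger$) yields $\delb \Psi_N = 0$.

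Focus on the case $N \geq 0$; the other case is identical after swapping the roles of $\Psi_N$ and $\Psi_N^\dagger$. Applying $\delb$ to $P_N = \Psi_N \Psi_N^\dagger$ and using $\delb \Psi_N^\dagger = 0$ gives $\delb P_N = (\delb \Psi_N)\Psi_N^\dagger$. Applying $\delb$ to the identity $\Psi_N^\dagger \Psi_N = 1$ (again using $\delb \Psi_N^\dagger = 0$) gives the crucial scalar identity $\Psi_N^\dagger (\delb \Psi_N) = 0$. Combining,
\[
(\delb P_N)(\delb P_N) = (\delb \Psi_N)\bigl(\Psi_N^\dagger (\delb \Psi_N)\bigr)\Psi_N^\dagger = 0,
\]
from which $(\nabla_N^\delb)^2 = q^{-2N}\,\Psi_N^\dagger \cdot 0 \cdot \Psi_N = 0$ follows at once.

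The most delicate step is the justification that $\delb$ extends from $\mathcal A(\qp^2_q)$ to sections of the line bundles $\mathcal L_N$ (and to $\mathcal A(S^5_q)$) as an honest graded derivation; because $F_1$ and $F_2$ are not primitive in $U_q(\mathfrak{su}(3))$, the bare right action alone is not a derivation, so one must appeal to the compatibility of $\delb$ with the bigraded calculus $\Omega^{(\bullet,\bullet)}$ to see that the natural extension does satisfy the Leibniz rule on products of the $z_k$. Once this compatibility is in place, the computation above settles the claim uniformly in $N$.
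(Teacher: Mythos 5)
Your proof is correct in substance and follows the same strategy as the paper: start from the recorded formula $(\nabla_N^{\delb})^2=q^{-2N}\Psi_N^\dagger(\delb P_N\,\delb P_N)\Psi_N$ and kill the curvature using the vanishing of $\Psi_N$, $\Psi_N^\dagger$ under the right action of $F_1,F_2$. The organizational difference is that the paper shows $\Psi_N^\dagger\,\delb P_N=0$ directly, quoting the identities $\Psi_N^\dagger\triangleleft F_2=0$, $\Psi_N^\dagger(\Psi_N\triangleleft F_2)=0$ and $\Psi_N^\dagger(\Psi_N\triangleleft F_2F_1)=0$ from \cite{DL}, whereas you collapse $(\delb P_N)^2$ itself and re-derive those identities from $z_k\triangleleft F_1=z_k\triangleleft F_2=0$ together with $\Psi_N^\dagger\Psi_N=1$; this makes the argument more self-contained, which is a genuine plus. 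The one point to tighten is the phrase ``extending $\delb$ as a graded derivation'': the right action of $F_i$ is not a derivation but obeys the $K_i$-twisted Leibniz rule $(ab)\triangleleft F_i=(a\triangleleft F_i)(b\triangleleft K_i)+(a\triangleleft K_i^{-1})(b\triangleleft F_i)$, so strictly speaking $\delb P_N=(\delb\Psi_N)\Psi_N^\dagger$ and $\Psi_N^\dagger(\delb\Psi_N)=0$ hold only up to harmless powers of $q$ coming from the $K$-factors. Since every monomial in the $z_k$ is a joint $K_1,K_2$-eigenvector and is annihilated by the right action of $F_1$ and $F_2$ (each term of the twisted Leibniz expansion contains an annihilated factor), all of your vanishing statements survive the twist; this explicit twisted-Leibniz bookkeeping is precisely what the paper's computation carries out, and with that substitution your argument is complete, including the case $N<0$ by exchanging the roles of $\Psi_N$ and $\Psi_N^\dagger$.
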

\begin{proof}
We will prove this for $N\geq 0$ and a similar discussion will cover the case $N<0$.

It suffices to show that
\begin{align}
\Psi_N^{\dagger}\delb P_N=\Psi_N^{\dagger}(P_N\triangleleft F_2F_1,P_N\triangleleft F_2)^t=0.\nonumber
\end{align}
The second component
\begin{align}
\Psi_N^{\dagger}(P_N\triangleleft F_2)&=\Psi_N^{\dagger}((\Psi_N\Psi_N^{\dagger})
 \triangleleft F_2)\nonumber\\
&=\Psi_N^{\dagger}\{(\Psi_N\triangleleft F_2)(\Psi_N^{\dagger}\triangleleft K_2)+(\Psi_N\triangleleft K_2^{-1})(\Psi_N^{\dagger}\triangleleft F_2)\}\nonumber\\
&=0\,.\nonumber
\end{align}
and this last equality is obtained by ( see \cite{DL}, section 6)
\begin{equation}\label{psi equalities 1}
\Psi_N^\dagger\triangleleft F_2=0\,,\quad \Psi_N^\dagger(\Psi_N\triangleleft F_2)=0\,.
\end{equation}
Similar computation shows that $\Psi_N^{\dagger}(P_N\triangleleft F_2F_1)$ also vanishes. For this the following identity is needed.
\begin{equation}\label{psi equalities 2}
\Psi_{N}^{\dagger}(\Psi_{N}\triangleleft F_2F_1)=0.
\end{equation}
Hence $(\nabla^{\delb}_N)^2=0$.
\end{proof}
Alternatively, as it was kindly pointed out to us by Francesco D'Andrea, using Lemma 6.1 in \cite{DL},  the full connection (holomorphic + antiholomorphic part) has curvature of type $(1, 1)$. This implies that the square of the holomorphic and antiholomorphic part is zero.

Proposition (\ref{flatness}) verifies that the operator $\nabla_N^{\delb}$ satisfies the condition of holomorphic structure as
given in the definition (\ref{holomorphic vect bundle}).

Flatness of $\nabla^{\delb}_N$ gives the following complex of vector spaces
\begin{eqnarray}
0\rightarrow \mathcal L_N \rightarrow \Omega^{(0,1)}\otimes_{\mathcal A(\qp^2_q)} \mathcal L_N\rightarrow
\Omega^{(0,2)}\otimes_{\mathcal A(\qp^2_q)} \mathcal L_N\rightarrow 0\,.\nonumber
\end{eqnarray}
The zeroth cohomology group $H^0(\mathcal L_N,\nabla^{\delb}_N)$ of this complex is called the
\textit {space of holomorphic sections of $\mathcal L_N$}.
The structure of this space is best described by the following theorem.
\begin{theorem}\label{holsectthm}
Let N be a positive integer. Then
\begin{eqnarray}
(1) &&\qquad H^0(\mathcal L_N,\nabla_N^{\delb})\simeq\mathbb C^{\frac{(N+1)(N+2)}{2}}\nonumber\\
(2) &&\qquad H^0(\mathcal L_{-N},\nabla_{-N}^{\delb})=0.\nonumber
\end{eqnarray}
\end{theorem}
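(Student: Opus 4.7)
The plan is to reduce the statement to a direct computation on the Peter--Weyl basis of $\mathcal L_N$, in parallel with the proof of Proposition \ref{holfunprop}.

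First, I would unwind the formula $\nabla_N^{\delb}=q^{-N}\Psi_N^\dagger\delb\Psi_N$. Using the graded Leibniz rule for $\delb$, the components of $\nabla_N^{\delb}\xi$ expand into terms containing either $\Psi_N^\dagger\triangleleft F_2$ or $\Psi_N^\dagger(\Psi_N\triangleleft F_2)$ (and their $F_2F_1$ analogues), which vanish by (\ref{psi equalities 1}) and (\ref{psi equalities 2}), plus remaining terms in which $\Psi_N^\dagger\Psi_N=1$ collapses the expression to $\xi\triangleleft F_2$ and $\xi\triangleleft F_2F_1$, up to invertible $K_2$-factors. Hence the holomorphicity condition $\nabla_N^{\delb}\xi=0$ becomes
\begin{equation}
\xi\triangleleft F_2=0 \qquad\text{and}\qquad \xi\triangleleft F_2F_1=0,\nonumber
\end{equation}
which is the line-bundle analogue of the condition used in Proposition \ref{holfunprop}. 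Since $\triangleleft$ is a right action, $\xi\triangleleft F_2=0$ already implies $\xi\triangleleft F_2F_1=(\xi\triangleleft F_2)\triangleleft F_1=0$, so only the first condition is binding.

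For part (1) with $N\geq 1$, I would expand a section in the Peter--Weyl basis of $\mathcal L_N$ as $\xi=\sum_{n,\underline j}\lambda_{n,\underline j}\,t(n,n+N)^{\underline 0}_{\underline j}$. By the isomorphism $Q$ of (\ref{Q iso}) the right action of $F_2$ corresponds to $\theta(F_2)=E_2$ acting on the second tensor factor $|n,n+N,0,0,0\rangle$, and (\ref{action on basis})--(\ref{Aij Bij}) give
\begin{equation}
E_2|n,n+N,0,0,0\rangle=\sqrt{\tfrac{[n][n+N+2]}{[2]}}\,|n,n+N,1,0,-\tfrac{1}{2}\rangle.\nonumber
\end{equation}
The coefficient vanishes iff $n=0$, so $\xi\triangleleft F_2=0$ forces $\lambda_{n,\underline j}=0$ for every $n\geq 1$. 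The surviving coefficients are indexed by the basis of $V(0,N)$, whose dimension is $\sum_{j_2=0}^{N}(j_2+1)=\frac{(N+1)(N+2)}{2}$, proving (1).

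For part (2), the same calculation applied to the basis $t(n+N,n)^{\underline 0}_{\underline j}$ of $\mathcal L_{-N}$ produces the coefficient $\sqrt{[n+N][n+2]/[2]}$, which is strictly positive for all $n\geq 0$ and $N\geq 1$. Hence every $\lambda_{n,\underline j}$ must vanish, giving $H^0(\mathcal L_{-N},\nabla_{-N}^{\delb})=0$. The main obstacle is the first step, namely extracting the clean conditions $\xi\triangleleft F_2=\xi\triangleleft F_2F_1=0$ from the Grassmann-style formula for $\nabla_N^{\delb}$; once this is done the remainder is representation-theoretic bookkeeping.
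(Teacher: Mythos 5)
Your proposal is correct and follows essentially the same route as the paper: reduce $\nabla_N^{\delb}\xi=0$ to $\xi\triangleleft F_2=0$ (and $\xi\triangleleft F_2F_1=0$) via the identities (\ref{psi equalities 1})--(\ref{psi equalities 3}) and $\Psi_N^\dagger\Psi_N=1$, then compute the right $F_2$-action on the Peter--Weyl basis through $Q$ and the coefficient $A_{0,0}$, which vanishes only for $n=0$ in the case of $\mathcal L_N$ and never for $\mathcal L_{-N}$. Your extra remark that the $F_2F_1$ condition is automatic from the $F_2$ condition is a harmless streamlining of what the paper already notes, and your dimension count of $V(0,N)$ matches the paper's.
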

\begin{proof}
First we recall that
\begin{align}
\nabla_N^{\delb}\xi=q^{-N}\Psi_N^\dagger\delb\Psi_N\xi=q^{-N}\Psi^\dagger_N
((\Psi_N\xi)\triangleleft F_2F_1 ,(\Psi_N\xi)\triangleleft F_2)^t.\nonumber
\end{align}
Using (\ref{psi equalities 1}), (\ref{psi equalities 2}) and the following identities
\begin{align}\label{psi equalities 3}
\Psi_{N}\triangleleft F_1=0,\quad \Psi_{N}\triangleleft K_1=\Psi_{N},\quad \Psi_{N}\triangleleft
 K_2=q^{-N/2}\Psi_{N},
\end{align}
we prove that $\nabla_N^{\delb} \xi=0$ is equivalent to the equations
$\xi\triangleleft F_2=0$ and $\xi\triangleleft F_2F_1=0$.

First we compute the second component of $\nabla_N^{\delb}\xi$.
\begin{align}
q^{-N}\Psi_N^{\dagger}((\Psi_N\xi)\triangleleft F_2)&=q^{-N}\Psi_N^{\dagger}
\{(\Psi_N\triangleleft F_2)(\xi\triangleleft K_2)+(\Psi_N\triangleleft K_2^{-1}
)(\xi\triangleleft F_2)\}\nonumber\\
&=q^{-N/2}\xi\triangleleft F_2.\nonumber
\end{align}
In addition to (\ref{psi equalities 1}) and (\ref{psi equalities 3}), here we have used $\Psi_N^{\dagger}\Psi_N=1$.
In a similar manner, one can show that the first component is
\begin{align}
q^{-N}\Psi_N^{\dagger}((\Psi_N\xi)\triangleleft &F_2F_1)\nonumber\\
&=q^{-N}\Psi_N^{\dagger}
\{(\Psi_N\triangleleft F_2)(\xi\triangleleft K_2)+(\Psi_N\triangleleft K_2^{-1}
)(\xi\triangleleft F_2)\}\triangleleft F_1\nonumber\\
&=q^{-N}\Psi_N^{\dagger}
\{q^{N/2}(\Psi_N\triangleleft F_2)\xi+q^{N/2}\Psi_N(\xi\triangleleft F_2)\}\triangleleft F_1\nonumber\\
&=q^{-N/2}\Psi_N^{\dagger}
\{(\Psi_N\triangleleft F_2F_1)(\xi\triangleleft K_1)+(\Psi_N\triangleleft F_2K_1^{-1})(\xi\triangleleft F_1)\nonumber\\&+(\Psi_N\triangleleft F_1)(\xi\triangleleft F_2K_1)+(\Psi_N\triangleleft K_1^{-1})(\xi\triangleleft F_2F_1)\}\nonumber\\
&=q^{-N/2}\xi\triangleleft F_2F_1\nonumber.
\end{align}

Let $N\geq 0$. In this case, a basis element of $\mathcal L_N$ is of the form $t(n,n+N)^{\underline 0}_{\underline j}$. Similar computation to the proof of proposition \ref{holfunprop}, using
(\ref{Q iso}), (\ref{action on basis}) and (\ref{Aij Bij}), shows that
$t(n,n+N)^{\underline 0}_{\underline j}\triangleleft F_2=
\gamma_nt(n,n+N)^{1,0,-\frac{1}{2}}_{\underline j}$,
where $\gamma_n=A_{0,0}=(\frac{[n][n+N+2]}{[2]})^{1/2}$.
If $\xi\in\mathcal L_N$, then $\xi$ can be written as $\sum_{n,\underline j} \lambda_{n,\underline j} t(n,n+N)^{\underline 0}_{\underline j}$. So
$\xi\triangleleft F_2=\sum\lambda_{n,\underline j}\gamma_nt(n,n+N)^{1,0,-1/2}_{\underline j}$.
Since $\gamma_n=0$ iff $n=0$,
$\xi\triangleleft F_2=0$ implies that the set $\{t(0,N)^{\underline 0}_{\underline j}\}$
will form a basis for the space of
Ker$ \nabla_N^{\delb}$.
Remembering that by (\ref{j indices}), the indices are restricted by $j_1=0,j_2=0,...,N$, and $j_2/2-|m|\in \mathbb N$, we
will find that dim Ker $\nabla_N^{\delb}=\frac{(N+1)(N+2)}{2}$.

When N is a negative integer, $\gamma_n$ will be $(\frac{[n-N][n+2]}{[2]})^{1/2}$ which is nonzero.
So dim Ker $\nabla_N^{\delb}=0$.

\end{proof}
\subsection{Bimodule connections}\label{bimod connection}
There exists a $\mathcal A(\qp^2_q)$-bimodules isomorphism $\sigma:\Omega^{(0,1)}\otimes_{\mathcal A(\qp^2_q)} \mathcal L_N\rightarrow
\mathcal L_N \otimes_{\mathcal A(\qp^2_q)} \Omega^{(0,1)}$ which acts as
\begin{eqnarray}
\sigma(\omega \otimes \xi)=q^{-N}\xi'\otimes \omega',\nonumber
\end{eqnarray}
such that both elements $\omega \otimes \xi$ and $\xi'\otimes \omega'$ in $\mathcal A(SU_q(3))^2$,
after multiplication are the same.
We try to illustrate this in the case of $N=1$. More precisely let us define the maps $\phi_1$ and $\phi_2$ as follows:
\begin{align*}
&\phi_1:\Omega^{(0,1)}\otimes_{\mathcal A(\qp^2_q)}\mathcal L_1 \rightarrow \mathcal A(SU_q(3))^2,\\
&\phi_1((v_+,v_-)^t\otimes \xi)=q^{\frac{1}{2}}(v_+\xi,v_-\xi)^t,\\
\end{align*}
and
\begin{align*}
&\phi_2:\mathcal L_1\otimes_{\mathcal A(\qp^2_q)}\Omega^{(0,1)} \rightarrow \mathcal A(SU_q(3))^2,\\
&\phi_2(\xi\otimes(v_+,v_-)^t)=q^{-\frac{1}{2}}(\xi v_+,\xi v_-)^t.
\end{align*}

We will prove that Im $\phi_1$=Im $\phi_2$. Therefore $\sigma=\phi_1^{-1} \phi_2$ gives an
isomorphism from $\mathcal L_1\otimes_{\mathcal A(\qp^2_q)}\Omega^{(0,1)}$ to
$\Omega^{(0,1)}\otimes_{\mathcal A(\qp^2_q)}\mathcal L_1$ which is coming from the multiplication map.
Let us first recall that as a $\ast$-algebra $\mathcal A(\qp^2_q)$ is generated by elements $p_{jk}=z_j^*z_k=(u^3_j)^*u^3_k$.
\begin{lemma}
With above notation $Im \,\phi_1=Im \,\phi_2$.
\end{lemma}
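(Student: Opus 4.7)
The approach will be to identify both images with a common subspace $W$ of $\mathcal A(SU_q(3))^2$ cut out by four $U_q(\mathfrak{su}(3))$-equivariance conditions modeled on those defining $\Omega^{(0,1)}$, but with the $K_1K_2^2$-eigenvalue shifted from $q^{3/2}$ to $q^{5/2}$ to account for the additional factor of $\mathcal L_1$.

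The key preliminary observation is that any $\xi \in \mathcal L_1$ enjoys stronger equivariance than its defining condition $\xi \triangleleft K_1K_2^2 = q\xi$: in fact one has $\xi \triangleleft K_1 = \xi$ and $\xi \triangleleft E_1 = \xi \triangleleft F_1 = 0$. This follows from the Peter--Weyl description $\mathcal L_1 \cong \bigoplus_n V(n,n+1)$ with basis $t(n,n+1)^{\underline 0}_{\underline j}$ combined with the $\theta$-twisted right-action formula (\ref{Q iso}): the upper index $\underline 0 = (0,0,0)$ selects the $\mathfrak{su}(2)_1$-invariant, weight-zero vector in $V(n,n+1)$ on the right factor, which is killed by $E_1$ and $F_1$ and fixed by $K_1$.

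Combining this with the coproducts $\Delta(K_i)=K_i\otimes K_i$ and $\Delta(F_1)=F_1\otimes K_1+K_1^{-1}\otimes F_1$ (and similarly for $E_1$), a short computation shows that for every $(v_+,v_-) \in \Omega^{(0,1)}$ and every $\xi \in \mathcal L_1$, both of the pairs $(v_+\xi,\, v_-\xi)$ and $(\xi v_+,\, \xi v_-)$ satisfy
\begin{align*}
(a_+, a_-) \triangleleft K_1K_2^2 &= q^{5/2}(a_+, a_-), &
(a_+, a_-) \triangleleft K_1 &= (q^{1/2}a_+,\, q^{-1/2}a_-),\\
(a_+, a_-) \triangleleft F_1 &= (0,\, a_+), &
(a_+, a_-) \triangleleft E_1 &= (a_-,\, 0).
\end{align*}
Letting $W \subset \mathcal A(SU_q(3))^2$ denote the subspace cut out by these four conditions, we obtain $\mathrm{Im}\,\phi_1 \subset W$ and $\mathrm{Im}\,\phi_2 \subset W$.

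The main obstacle is the reverse inclusion $W \subset \mathrm{Im}\,\phi_1 \cap \mathrm{Im}\,\phi_2$. I would establish it using the Peter--Weyl decomposition of $\mathcal A(SU_q(3))$ to split $W$ into $U_q(\mathfrak{su}(3))$-isotypic components, and then check in each component that an arbitrary element of the prescribed weight type factors as a sum $\sum_i v^{(i)}_\pm \xi^{(i)}$ (respectively $\sum_i \xi^{(i)} v^{(i)}_\pm$). Since both images are $\mathcal A(\qp^2_q)$-sub-bimodules of $W$ under the natural left and right actions (which preserve $\Omega^{(0,1)}$ and $\mathcal L_1$ separately), this surjectivity check reduces to producing a modest generating family inside both images --- for instance, products involving the generators $z_j$ of $\mathcal L_1$ together with elementary forms in $\Omega^{(0,1)}$ coming from $\delb p_{jk}$, where $p_{jk}=z_j^*z_k$.
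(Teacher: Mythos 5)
Your first step is sound: using $\xi\triangleleft K_1=\xi$, $\xi\triangleleft E_1=\xi\triangleleft F_1=0$ for $\xi\in\mathcal L_1$ (which indeed follows from $\mathcal L_1\subset\mathcal A(S^5_q)$) and the coproducts, both $(v_+\xi,v_-\xi)$ and $(\xi v_+,\xi v_-)$ satisfy the four conditions of (\ref{omega01}) with the $K_1K_2^2$-eigenvalue shifted to $q^{5/2}$, so $\mathrm{Im}\,\phi_1$ and $\mathrm{Im}\,\phi_2$ both land in your space $W$. But this is the easy half, and the actual content of the lemma is the reverse factorization, which your proposal only sketches: you say you would decompose $W$ into Peter--Weyl isotypic components and ``check in each component'' that elements factor through the multiplication maps, reducing to ``a modest generating family,'' but no such check, multiplicity computation, or generating family is actually produced. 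Note moreover that your plan proves a strictly stronger statement than the lemma, namely that $\mathrm{Im}\,\phi_1=\mathrm{Im}\,\phi_2=W$; establishing surjectivity of $\Omega^{(0,1)}\otimes_{\mathcal A(\qp^2_q)}\mathcal L_1\to W$ and $\mathcal L_1\otimes_{\mathcal A(\qp^2_q)}\Omega^{(0,1)}\to W$ is a nontrivial representation-theoretic claim, and as written the step that carries all the difficulty is missing.

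The paper avoids this issue entirely by exploiting the explicit shape of the generating $(0,1)$-forms: $\delb p_{jk}$ is the column $(-q^{-3/2}(u^1_j)^*,\,q^{-1/2}(u^2_j)^*)^t\,u^3_k$, so every generator of $\Omega^{(0,1)}$ already ends in a factor $u^3_k\in\mathcal L_1$. Hence a spanning element $t(n,n+1)^{\underline 0}_{\underline i}\otimes p_{rs}\delb p_{jk}$ of $\mathcal L_1\otimes_{\mathcal A(\qp^2_q)}\Omega^{(0,1)}$ regroups, after multiplying out, as $T\otimes u^3_k$ with $T=t(n,n+1)^{\underline 0}_{\underline i}p_{rs}\,(-q^{-3/2}(u^1_j)^*,\,q^{-1/2}(u^2_j)^*)^t$, and the only thing to verify is that $T$ satisfies (\ref{omega01}) --- precisely the kind of $K_1,E_1,F_1,K_1K_2^2$ equivariance computation you carry out for the easy inclusion. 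This gives $\mathrm{Im}\,\phi_2\subset\mathrm{Im}\,\phi_1$ (and symmetrically the converse) directly, with no need to identify either image with an intrinsically defined $W$. To make your argument complete you would either have to carry out the isotypic analysis of $W$ in full, or adopt this regrouping device; in its current form the proposal has a genuine gap at its central step.
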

\begin{proof}
case1. $\alpha \in$ Im $\phi_2$ is a basis element.
\begin{align}
\alpha&=\phi_2(t(n,n+1)^{\underline 0}_{\underline i}\otimes p_{rs}\delb p_{jk})=
q^{-1/2}t(n,n+1)^{\underline 0}_{\underline i}
p_{rs}\binom{-q^{-3/2}(u^1_j)^*}{q^{-1/2}(u^2_j)^*} u^3_k \nonumber\\
&=q^{-1/2}\binom{-q^{-3/2}t(n,n+1)^{\underline 0}_{\underline i}p_{rs}(u^1_j)^*}{q^{-1/2}t(n,n+1)^{\underline 0}_{\underline i}p_{rs}(u^2_j)^*}u^3_k=q^{-1}\phi_1(T_{\underline irsj}\otimes u^3_k),\nonumber
\end{align}
where
\begin{equation}
T_{\underline irsj}=(-q^{-3/2}t(n,n+1)^{\underline 0}_{\underline i}p_{rs}(u^1_j)^*,q^{-1/2}p_{rs}t(n,n+1)^{\underline 0}_{\underline i}(u^2_j)^*)^t.\nonumber
\end{equation}
Since $u^3_k \in \mathcal L_1$, it is enough to prove that $T_{\underline irsj} \in \Omega^{(0,1)}$ .
In order to do so, we need to show that the pair $(v_+,v_-)$
 defined as below, satisfies the properties given in (\ref{omega01}).
\begin{eqnarray}
(v_+,v_-)^t=(-q^{-3/2}t(n,n+1)^{\underline 0}_{\underline i}p_{rs}(u^1_j)^*,
\,q^{-1/2}t(n,n+1)^{\underline 0}_{\underline i}p_{rs}(u^2_j)^* )^t.\nonumber
\end{eqnarray}

We will check $(v_+,v_-)\triangleleft E_1=(v_-,0)$.
\begin{align}
v_+\triangleleft E_1&=-q^{-3/2}t(n,n+1)^{\underline 0}_{\underline i}p_{rs}(u^1_j)^*\triangleleft E_1\nonumber\\
&=-q^{-3/2}\{(t(n,n+1)^{\underline 0}_{\underline i}\triangleleft E_1)
((p_{rs}(u^1_j)^*)\triangleleft K_1)\nonumber\\&+(t(n,n+1)^{\underline 0}_{\underline i}\triangleleft K_1^{-1})((p_{rs}(u^1_j)^*)\triangleleft E_1)\}\nonumber\\
&=-q^{-3/2}t(n,n+1)^{\underline 0}_{\underline i}
 \{(p_{rs}\triangleleft E_1)( (u^1_j)^*\triangleleft K_1)\nonumber\\
&+
(p_{rs}\triangleleft K_1^{-1})((u^1_j)^*\triangleleft E_1)\}\nonumber\\
&=-q^{-3/2}t(n,n+1)^{\underline 0}_{\underline i}p_{rs}(-q)(u^2_j)^*
\nonumber\\
&=q^{-1/2}t(n,n+1)^{\underline 0}_{\underline i}p_{rs}(u^2_j)^*\nonumber\\
&=v_-.\nonumber
\end{align}
Here we have used the following identities which are obtained from (\ref{action on basis}), (\ref{lr actions}) and
(\ref{Q iso}).
\begin{align}
&t(n,n+1)^{\underline 0}_{\underline i}\triangleleft K_1=t(n,n+1)^{\underline 0}_{\underline i},&&t(n,n+1)^{\underline 0}_{\underline i}\triangleleft E_1=0\nonumber\\
&p_{ij}\triangleleft E_1=0,&&(u^1_j)^*\triangleleft K_1=q^{1/2}(u^1_j)^*,\nonumber\\
&p_{ij}\triangleleft K_1=p_{ij},&&
(u^1_j)^*\triangleleft E_1=(-q)(u^2_j)^*.\nonumber
\end{align}
Similarly
\begin{align}
v_-\triangleleft E_1&=q^{-1/2}t(n,n+1)^{\underline 0}_{\underline i}p_{rs}(u^2_j)^*\triangleleft E_1\nonumber\\
&=q^{-1/2}\{(t(n,n+1)^{\underline 0}_{\underline i}\triangleleft E_1)
((p_{rs}(u^2_j)^*)\triangleleft K_1)\nonumber\\&+(t(n,n+1)^{\underline 0}_{\underline i}\triangleleft K_1^{-1})((p_{rs}(u^2_j)^*)\triangleleft E_1)\}\nonumber\\
&=q^{-1/2}t(n,n+1)^{\underline 0}_{\underline i}
 \{(p_{rs}\triangleleft E_1)( (u^2_j)^*\triangleleft K_1)\nonumber\\
&+
(p_{rs}\triangleleft K_1^{-1})((u^2_j)^*\triangleleft E_1)\}\nonumber\\
&=0.\nonumber
\end{align}
Two more identities which have been used above, are
\begin{align}
(u^2_j)^*\triangleleft K_1=q^{-1/2}(u^1_j)^*,&&(u^2_j)^*\triangleleft E_1=0.\nonumber
\end{align}

The case $(v_+,v_-)\triangleleft F_1=(0,v_+)$ is similar and the other two cases $(v_+,v_-)\triangleleft K_1=(q^{1/2}v_+,q^{-1/2}v_-)$ and
$(v_+,v_-)\triangleleft K_1K_2^2=q^{3/2}(v_+,v_-)$ are straightforward, but the following relations are needed.
\begin{align}
&t(n,n+1)^{\underline 0}_{\underline i}\triangleleft K_2=q^{1/2}t(n,n+1)^{\underline 0}_{\underline i},&
&t(n,n+1)^{\underline 0}_{\underline i}\triangleleft F_1=0,\nonumber\\
&(u^1_j)^*\triangleleft K_2=(u^1_j)^*,&&(u^2_j)^*\triangleleft K_2=q^{1/2}(u^2_j)^*,\nonumber\\
&(u^1_j)^*\triangleleft F_1=0,&&(u^2_j)^*\triangleleft F_1=(-q)^{-1}(u^1_j)^*,\nonumber\\
&p_{ij}\triangleleft K_2=p_{ij},&
&p_{ij}\triangleleft F_1=0.\nonumber
\end{align}
Case2. $\alpha \in$ Im $\phi_2$ is a general element.
\begin{align}
\alpha&=\phi_2(\sum_{n,\underline i}c_{n\underline i}t(n,n+1)^{\underline 0}_{\underline i}
 \otimes\sum_{r,s,j,k} d_{rsjk}p_{rs}\delb p_{jk})\nonumber\\
&=q^{-1/2}\sum_{n,\underline i,r,s,j,k}c_{n\underline i}t(n,n+1)^{\underline 0}_{\underline i}d_{rsjk}p_{rs}\binom{-q^{-3/2}(u^1_j)^*}{q^{-1/2}(u^2_j)^*}u^3_k\nonumber\\
&=q^{-1}\phi_1(\sum_k\{\sum_{\underline i,r,s,j}c_{\underline i}d_{rsjk}
t(n,n+1)^{\underline 0}_{\underline i}p_{rs}\binom{-q^{-3/2}(u^1_j)^*}{q^{-1/2}(u^2_j)^*}\}\otimes u^3_k)\nonumber\\
&=q^{-1}\phi_1(\sum_k A_k\otimes u^3_k),\nonumber
\end{align}
where
\begin{equation}
A_k=\sum_{n,\underline i,r,s,j}c_{n\underline i}d_{rsjk}
t(n,n+1)^{\underline 0}_{\underline i}p_{rs}\binom{q^{-3/2}
(u^1_j)^*}{q^{-1/2}(u^2_j)^*}\in \Omega^{(0,1)}.\nonumber
\end{equation}
The proof for $Im$ $\phi_2$ $\subset$ $Im$ $\phi_1$ is similar.\\
\end{proof}
In general the maps $\phi_1$ and $\phi_2$ will be defined as
\begin{align*}
&\phi_1:\Omega^{(0,1)}\otimes_{\mathcal A(\qp^1_q)}\mathcal L_N \rightarrow \mathcal A(SU_q(3))^2,\\
&\phi_1((v_+,v_-)^t\otimes \xi)=q^{\frac{N}{2}}(v_+\xi,v_-\xi)^t,\\
\end{align*}
and
\begin{align*}
&\phi_2:\mathcal L_N \otimes_{\mathcal A(\qp^1_q)}\Omega^{(0,1)} \rightarrow \mathcal A(SU_q(3))^2,\\
&\phi_2(\xi\otimes(v_+,v_-))=q^{-\frac{N}{2}}(\xi v_+,\xi v_-).
\end{align*}

Now, we prove that $\nabla^{\delb}_N$ has the right $\sigma$-twisted Leibniz property with respect to the map $\sigma=\phi_1^{-1} \phi_2$.
\begin{proposition}
Taking $\sigma$ as above, the following holds
\begin{eqnarray}\label{twisted Leibniz}
\nabla^{\delb}_N(\xi a)=(\nabla^{\delb}_N\xi) a+\sigma(\xi\otimes \delb a),
\quad \forall a \in \mathcal A(\qp^2_q),\,\forall\xi \in \mathcal L_N.
\end{eqnarray}
\end{proposition}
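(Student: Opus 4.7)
The plan is to unpack the definition $\nabla^{\delb}_N = q^{-N}\Psi_N^\dagger\delb\Psi_N$ applied to $\xi a$, and reduce the identity to an \emph{untwisted} Leibniz rule for $\delb$ together with the projector relation $\Psi_N^\dagger\Psi_N = 1$. Since the components of $\Psi_N$ lie in $\mathcal L_{-N}$ and $\xi\in\mathcal L_N$, the column $\Psi_N\xi$ has entries in $\mathcal L_0=\mathcal A(\qp^2_q)$, and right-multiplication by $a\in\mathcal A(\qp^2_q)$ keeps it in $\mathcal A(\qp^2_q)^m$, so all manipulations take place within $\mathcal A(\qp^2_q)^m$.

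First I would verify that $\delb$ restricted to $\mathcal A(\qp^2_q)$ obeys the ordinary Leibniz rule $\delb(bc) = (\delb b)\,c + b\,\delb c$. This is not automatic from $\Delta(F_2) = F_2\otimes K_2 + K_2^{-1}\otimes F_2$, but it holds in our setting because every element of $\mathcal A(\qp^2_q)$ is $K_i$-invariant for $i=1,2$ (the basis vectors $t(n,n)^{\underline 0}_{\underline j}$ correspond via the Peter--Weyl isomorphism $Q$ to $|n,n,0,0,0\rangle$ in the second tensor factor, on which both $K_1$ and $K_2$ act trivially) and is annihilated by the right $F_1$-action (since $\theta(F_1)=E_1$ sends that highest-weight vector to zero). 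The $K_i$-invariance collapses the two-term $F_2$-Leibniz to its untwisted form, and combined with the $F_1$-annihilation it kills the cross-terms in the four-term expansion of $\Delta(F_2F_1)$.

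Given this, I will compute
\begin{align*}
\nabla^{\delb}_N(\xi a) &= q^{-N}\Psi_N^\dagger\,\delb(\Psi_N\xi\cdot a) \\
&= q^{-N}\Psi_N^\dagger\bigl[\,\delb(\Psi_N\xi)\,a + (\Psi_N\xi)\,\delb a\bigr] \\
&= (\nabla^{\delb}_N\xi)\,a + q^{-N}\,\xi\,\delb a,
\end{align*}
using $\Psi_N^\dagger\Psi_N = 1$ in the last step. It then remains to match the residual pair $q^{-N}\xi\,\delb a\in\mathcal A(SU_q(3))^2$ with $\sigma(\xi\otimes\delb a)$. By the definition $\sigma = \phi_1^{-1}\phi_2$, together with $\phi_1 = q^{N/2}\cdot(\mathrm{mult})$ and $\phi_2 = q^{-N/2}\cdot(\mathrm{mult})$, the image of $\sigma(\xi\otimes\delb a)$ under plain multiplication into $\mathcal A(SU_q(3))^2$ equals $q^{-N/2}\cdot q^{-N/2}\,\xi\,\delb a = q^{-N}\,\xi\,\delb a$; injectivity of $\phi_1$ (the content of the preceding Lemma) then promotes this pair-wise equality to the desired identity in $\Omega^{(0,1)}\otimes_{\mathcal A(\qp^2_q)}\mathcal L_N$.

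The main obstacle will be carefully tracking the $q$-powers: the $q^{-N}$ prefactor in $\nabla^{\delb}_N$, the weight $\Psi_N\triangleleft K_2^{-1}=q^{N/2}\Psi_N$, and the $q^{-N}$ asymmetry built into $\sigma$ must all conspire. The normalization $q^{-N}$ in the definition of $\nabla_N$ was introduced precisely to make this twist compatibility work, as foreshadowed in the remark following the definition of $\Psi_N$.
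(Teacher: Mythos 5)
Your proof is correct, but it takes a genuinely different route from the paper's. The paper verifies the identity componentwise by expanding the right actions of $F_2$ and $F_2F_1$ on $\Psi_N\xi a$ through the coproduct, and then invoking the $\Psi_N$-specific identities $\Psi_N^\dagger\triangleleft F_2=0$, $\Psi_N^\dagger(\Psi_N\triangleleft F_2)=0$, $\Psi_N\triangleleft F_1=0$, $\Psi_N\triangleleft K_2=q^{-N/2}\Psi_N$ together with $\xi\triangleleft K_2=q^{N/2}\xi$; the twist factor $q^{-N}$ then emerges from the interplay of these weights with the prefactor $q^{-N}$. You instead observe that the entries of $\Psi_N\xi$ lie in $\mathcal L_{-N}\cdot\mathcal L_N\subset\mathcal A(\qp^2_q)$, prove an \emph{untwisted} Leibniz rule for $\delb$ on $\mathcal A(\qp^2_q)$ (correctly justified by right $K_1,K_2$-invariance and right $F_1$-annihilation, which indeed kill the cross-terms of $\Delta(F_2F_1)$), and then reduce everything to $\Psi_N^\dagger\Psi_N=1$ plus the normalization bookkeeping for $\sigma=\phi_1^{-1}\phi_2$. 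This buys a cleaner conceptual picture: the only input about $\Psi_N$ is $\Psi_N^\dagger\Psi_N=1$, and the twist is seen to reside entirely in the $q^{-N}$ discrepancy between $\sigma$ and plain multiplication (your computation $m\circ\sigma=q^{-N}m$ is exactly the paper's remark that ``$q^{-N}$ appears after acting $\sigma$''), which also explains the normalization $q^{-N}$ in $\nabla_N$. Two small caveats: the preceding Lemma establishes $\mathrm{Im}\,\phi_1=\mathrm{Im}\,\phi_2$, not injectivity of $\phi_1$ --- the faithfulness of the multiplication identification is tacitly assumed in the paper (it is needed even to define $\sigma=\phi_1^{-1}\phi_2$), and both your argument and the paper's rely on it at the same level of rigor; and you should state explicitly that you identify elements of $\Omega^{(0,1)}\otimes_{\mathcal A(\qp^2_q)}\mathcal L_N$ with pairs in $\mathcal A(SU_q(3))^2$ via plain componentwise multiplication (rather than via $\phi_1$), which is the reading under which all the powers of $q$ match.
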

\begin{proof}
By (\ref{psi equalities 1}), (\ref{psi equalities 3}) and the fact that $\xi \triangleleft K_2=q^{N/2}\xi$, we compute the second component of the left hand side as follows
\begin{align}
q^{-N}\Psi_N^{\dagger}((\Psi_N \xi a)&\triangleleft F_2)\nonumber\\
&=q^{-N}\Psi_N^{\dagger}\{(\Psi_N\triangleleft F_2)((\xi a)\triangleleft K_2)+(\Psi_N\triangleleft K_2^{-1})((\xi a)\triangleleft F_2)\}\nonumber\\
&=q^{-N/2}(\xi \triangleleft F_2)a+q^{-N}\xi (a\triangleleft F_2).\nonumber
\end{align}
(Note that this actually is $\phi_1 \nabla_N^{\delb}$.)
For the second component of the right hand side we will get
\begin{eqnarray}
q^{-N/2}(\xi \triangleleft F_2) a+\sigma(\xi \otimes a\triangleleft F_2)\nonumber.
\end{eqnarray}
The previous lemma says that $q^{-N}$ will appear after acting $\sigma$ on the second term.
It can be seen that $\phi_1$ of both sides coincides.
 Computation for the second component will be similar.
\end{proof}
Now we will come up to the analog of proposition 3.8 of (\cite{KLS}).
\begin{proposition}\label{LN+M}
The tensor product connection
$\nabla_N^{\delb} \otimes 1+ (\sigma \otimes 1)(1 \otimes \nabla_M^{\delb})$
 coincides with the holomorphic structure on $\mathcal L_N\otimes_{\mathcal A(\qp^1_q)}\mathcal L_M$
when identified with $\mathcal L_{N+M}$.
\end{proposition}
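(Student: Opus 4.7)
Identifying $\mathcal L_N\otimes_{\mathcal A(\qp^2_q)}\mathcal L_M$ with $\mathcal L_{N+M}$ via the multiplication map $\xi\otimes\eta\mapsto\xi\eta$, the proposition reduces to verifying
\[
\nabla_{N+M}^{\delb}(\xi\eta)=(\nabla_N^{\delb}\xi)\,\eta+(\sigma\otimes 1)\bigl(\xi\otimes\nabla_M^{\delb}\eta\bigr)
\]
for every $\xi\in\mathcal L_N$ and $\eta\in\mathcal L_M$. I would test this identity by evaluating both sides through $\phi_1^{(N+M)}$ into $\mathcal A(SU_q(3))^2$, where the manipulations reduce to products of concrete elements of $\mathcal A(SU_q(3))$.

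For the left-hand side, the computation carried out in the proof of Theorem~\ref{holsectthm} specializes to
\[
\phi_1^{(N+M)}\bigl(\nabla_{N+M}^{\delb}(\xi\eta)\bigr)=q^{-(N+M)/2}\bigl((\xi\eta)\triangleleft F_2F_1,\ (\xi\eta)\triangleleft F_2\bigr)^t.
\]
I would then expand the right actions of $F_2$ and $F_2F_1$ on the product using $\Delta(F_i)=F_i\otimes K_i+K_i^{-1}\otimes F_i$ together with the weight identities $\xi\triangleleft K_2=q^{N/2}\xi$, $\eta\triangleleft K_2=q^{M/2}\eta$ and $\xi\triangleleft K_1=\xi$, $\eta\triangleleft K_1=\eta$, which follow from (\ref{Q iso}) and (\ref{action on basis}) applied to the Peter--Weyl basis of each line bundle. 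The resulting expression splits cleanly into one group of terms involving derivatives of $\xi$ (multiplied on the right by $\eta$) and one group involving derivatives of $\eta$ (preceded by $\xi$).

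For the right-hand side the first summand $(\nabla_N^{\delb}\xi)\eta$ reproduces the $\xi$-derivative terms directly, using Theorem~\ref{holsectthm} applied to $\xi\in\mathcal L_N$. The second summand $(\sigma\otimes 1)(\xi\otimes\nabla_M^{\delb}\eta)$ yields the $\eta$-derivative terms by combining Theorem~\ref{holsectthm} applied to $\eta\in\mathcal L_M$ with the defining property $\phi_1\circ\sigma=\phi_2$ of the bimodule isomorphism established in the preceding lemma; the $q^{-N}$ twist carried by $\sigma$, together with the normalization factor $q^{-N}$ inserted into the definition of $\nabla_N$, combine to deliver precisely the $q$-power demanded to match the left-hand side.

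The main obstacle is the careful bookkeeping of $q$-powers, which enter from four distinct sources: the normalization $q^{-N}$ in $\nabla_N^{\delb}=q^{-N}\Psi_N^\dagger\delb\Psi_N$; the factors $q^{\pm N/2}$ appearing in $\phi_1^{(N)}$ and $\phi_2^{(N)}$; the $K_2$-weights produced by the Leibniz expansion of $\triangleleft F_2$ and $\triangleleft F_2F_1$ on the product $\xi\eta$; and the $q^{-N}$ twist inside $\sigma$. That these exponents align so that the two sides agree is the compatibility foreshadowed in the remark following the definition of $\nabla_N$, and once it is verified the proposition follows by a direct calculation on simple tensors.
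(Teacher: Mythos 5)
Your plan is correct and follows essentially the same route as the paper: the paper likewise reduces $\nabla_{N+M}^{\delb}(\xi\eta)$ to $q^{-(N+M)/2}\bigl((\xi\eta)\triangleleft F_2F_1,\ (\xi\eta)\triangleleft F_2\bigr)^t$ (rederiving inline the formula you quote from the proof of Theorem \ref{holsectthm}), expands the right $F_2$- and $F_2F_1$-actions on the product via the coproduct and the $K_1,K_2$-weights of $\xi$ and $\eta$, and matches the two groups of terms against $(\nabla_N^{\delb}\xi)\eta$ and $(\sigma\otimes 1)(\xi\otimes\nabla_M^{\delb}\eta)$, with the $q^{-N}$ twist of $\sigma=\phi_1^{-1}\phi_2$ supplying exactly the exponent needed. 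The only remaining work in your write-up is the explicit $q$-power bookkeeping you already flag, which is precisely what the paper's displayed computation carries out.
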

\begin{proof}
\begin{eqnarray}
&&\nabla_{N+M}^{\delb}(\xi_1\xi_2)\nonumber \\
&&=q^{-(N+M)}\Psi^{\dagger}_{N+M}\delb\Psi_{N+M}(\xi_1\xi_2) \nonumber \\
&&=q^{-(N+M)}\Psi^{\dagger}_{N+M}\binom {(\Psi_{N+M}\xi_1\xi_2)\triangleleft F_2F_1}{(\Psi_{N+M}\xi_1\xi_2)
\triangleleft F_2}\nonumber\\
&&=q^{-(N+M)}\Psi^{\dagger}_{N+M}
\binom{\{(\Psi_{N+M}\triangleleft F_2) ((\xi_1\xi_2)\triangleleft K_2)\}\triangleleft F_1}{(\Psi_{N+M}\triangleleft F_2) ((\xi_1\xi_2)\triangleleft K_2)}\nonumber\\
&&+q^{-(N+M)}\Psi^{\dagger}_{N+M}\binom
{\{(\Psi_{N+M}\triangleleft K_2^{-1})( (\xi_1\xi_2)\triangleleft F_2)\}\triangleleft F_1}{(\Psi_{N+M}\triangleleft K_2^{-1})( (\xi_1\xi_2)\triangleleft F_2)}\nonumber\\
&&=q^{-\frac{N+M}{2}}\binom{(\xi_1\xi_2)\triangleleft F_2F_1}{(\xi_1\xi_2)\triangleleft F_2}
\nonumber\\&&= q^{-\frac{N}{2}}\binom{\{(\xi_1 \triangleleft F_2)\xi_2
+(q^{-N-M/2}\xi_1(\xi_2 \triangleleft F_2)\}\triangleleft F_1}{(\xi_1 \triangleleft F_2)\xi_2
+q^{-N-M/2}\xi_1(\xi_2 \triangleleft F_2)}.\nonumber
\end{eqnarray}
Besides (\ref{psi equalities 1}) and (\ref{psi equalities 2}), we also applied the identities
$\xi_i\triangleleft K_1=0$, $\xi_i\triangleleft F_1=0$.

On the other hand
\begin{eqnarray}
&&((\nabla_N^{\delb}\otimes 1)+(\sigma \otimes 1)(1 \otimes
\nabla_M^{\delb}))(\xi_1\otimes \xi_2)=\nonumber\\
&&q^{-N/2}\binom{\xi_1 \triangleleft F_2F_1}{\xi_1 \triangleleft F_2}\otimes \xi_2
+(\sigma \otimes 1)(\xi_1 \otimes q^{-M/2}
\binom{\xi_2 \triangleleft F_2F_1}{\xi_2 \triangleleft F_2}).\nonumber
\end{eqnarray}
Interpreting this expression as an element of $\Omega^{(0,1)}\otimes \mathcal L_{N+M}$,
 after applying the map $\sigma$, which gives us $q^{-N}$ on the second summand,
 we will get the same result.

\end{proof}

Thanks to proposition (\ref{LN+M}), the space $R:=\bigoplus H^0(\mathcal L_N, \nabla^{\delb}_N)$ has a ring structure under the natural tensor product of bimodules. In the following, we identify the quantum homogeneous coordinate ring $R$ with a twisted polynomial algebra in three variables 
\begin{theorem}\label{thm ring structure}
We have the algebra isomorphism
\begin{eqnarray}
R:=\bigoplus_{\substack N\geq 0}H^0(\mathcal L_N,\nabla_N^{\delb})\simeq
\frac{\compl \langle z_1,z_2,z_3\rangle}{\langle\, z_iz_j-qz_jz_i:1\leq i<j\leq 3\,\rangle}\nonumber
\end{eqnarray}
\end{theorem}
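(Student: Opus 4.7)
The plan is to construct an explicit degree-preserving algebra homomorphism from the twisted polynomial algebra to $R$ and then confirm it is an isomorphism via a weight-space count in each degree. First, I would send each generator $z_i$ to $u^3_i \in \mathcal L_1$. To see this image lies in $H^0(\mathcal L_1,\nabla_1^{\delb})$, I would use the characterization extracted inside the proof of Theorem \ref{holsectthm}: an element $\xi \in \mathcal L_N$ is holomorphic iff $\xi \triangleleft F_2 = 0$ and $\xi \triangleleft F_2 F_1 = 0$. The formulas in (\ref{lr actions}) give $u^3_k \triangleleft F_i = \delta_{i,3}\,u^{i+1}_k = 0$ for $i=1,2$, so both conditions hold. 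The defining relations $z_iz_j = q z_jz_i$ for $i<j$ already hold inside $\mathcal A(S^5_q)$, and Proposition \ref{LN+M} ensures that the ordinary product of holomorphic sections is again a holomorphic section of the correct line bundle $\mathcal L_{N+M}$. Together these give a well-defined graded algebra map
\begin{equation}
\pi : \frac{\compl\langle z_1,z_2,z_3\rangle}{\langle z_iz_j - qz_jz_i : 1\leq i<j\leq 3\rangle} \longrightarrow R.\nonumber
\end{equation}

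Next I would compare graded dimensions. On the left, a standard Diamond-lemma reordering by the rules $z_jz_i \mapsto q^{-1}z_iz_j$ (for $i<j$) reduces every element to a combination of ordered monomials $z_1^a z_2^b z_3^c$, so the degree-$N$ piece has dimension at most $\frac{(N+1)(N+2)}{2}$. On the right, Theorem \ref{holsectthm}(1) gives exactly $\dim H^0(\mathcal L_N,\nabla_N^{\delb}) = \frac{(N+1)(N+2)}{2}$. Thus it suffices to prove that the images $z_1^a z_2^b z_3^c$ with $a+b+c = N$ are linearly independent inside $\mathcal A(S^5_q)$, because this will simultaneously force the ordered monomials to be a basis of the domain and $\pi$ to be a bijection in each degree.

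For this linear independence I would employ the left $U_q(\mathfrak{su}(3))$-action. Since $K_1,K_2$ are grouplike, they act on products multiplicatively, and (\ref{lr actions}) yields
\begin{equation}
K_1 \triangleright z_1^a z_2^b z_3^c = q^{(b-a)/2}\, z_1^a z_2^b z_3^c, \qquad K_2 \triangleright z_1^a z_2^b z_3^c = q^{(c-b)/2}\, z_1^a z_2^b z_3^c.\nonumber
\end{equation}
If two distinct triples $(a,b,c)\neq(a',b',c')$ with $a+b+c=a'+b'+c'=N$ produced the same pair of weights $(b-a,c-b)$, one would obtain $a-a'=b-b'=c-c'$, and summing then forces all three differences to vanish. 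So distinct monomials occupy distinct joint eigenspaces of $K_1,K_2$ and are linearly independent. The main conceptual point is really verifying that the ring structure on $R$ coming from the tensor product of holomorphic bimodule connections agrees with ordinary multiplication inside $\mathcal A(S^5_q)$; this is exactly the content of Proposition \ref{LN+M}, and once it is granted the remainder is a clean PBW-plus-weight-decomposition argument.
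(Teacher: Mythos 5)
Your overall strategy (map $z_i\mapsto u^3_i$, check holomorphicity via $\xi\triangleleft F_2=\xi\triangleleft F_2F_1=0$, use Proposition \ref{LN+M} to get a graded algebra map, then match dimensions against Theorem \ref{holsectthm}) is sound and close in spirit to the paper, but your linear-independence step has a genuine gap. The weight computation only shows that distinct ordered monomials $z_1^az_2^bz_3^c$ with $a+b+c=N$ lie in distinct joint eigenspaces of the left action of $K_1,K_2$; it does not show that any of these monomials is nonzero in $\mathcal A(S^5_q)$, and the zero element lies in every weight space. Your whole count collapses without non-vanishing: if some monomial were zero, the images would no longer be $\frac{(N+1)(N+2)}{2}$ independent vectors, so you could conclude neither surjectivity of $\pi$ in degree $N$ (which needs exactly $\dim H^0(\mathcal L_N,\nabla_N^{\delb})$ independent images) nor that the ordered monomials form a basis of the domain. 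Non-vanishing of these monomials in the quantum $5$-sphere algebra is true but is not a formal consequence of the defining relations you invoke; it requires an input such as a PBW-type basis for $\mathcal A(S^5_q)$ (equivalently a faithful representation), or an identification with the Peter--Weyl basis.

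This missing input is exactly what the paper's proof supplies: starting from the explicit formula (\ref{t basis}) it computes $F_1^rF_2^j\triangleright z_3^N$ and shows that the basis vectors $t(0,N)^{\underline 0}_{\underline j}$ of $H^0(\mathcal L_N,\nabla_N^{\delb})$ (already identified in Theorem \ref{holsectthm}) equal explicit nonzero scalars times $z_1^{j_2/2-m}z_2^{j_2/2+m}z_3^{N-j_2}$. That single computation gives at once the non-vanishing and linear independence of the ordered monomials, the fact that they exhaust $H^0(\mathcal L_N,\nabla_N^{\delb})$, and (for $N=1$) that the generators of $R$ are precisely $z_1,z_2,z_3$; the relations $z_iz_j=qz_jz_i$ then finish the argument, as in your proposal. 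To repair your version, either cite a PBW basis of $\mathcal A(S^5_q)$ (or of $\mathcal A(SU_q(3))$) to guarantee $z_1^az_2^bz_3^c\neq 0$, or reproduce the paper's computation expressing these monomials as nonzero multiples of the Peter--Weyl vectors $t(0,N)^{\underline 0}_{\underline j}$; with that added, your weight-space argument and dimension count do yield the isomorphism. (Also note that the degree-zero piece needs Proposition \ref{holfunprop}, since Theorem \ref{holsectthm} is stated for $N>0$ — a minor point.)
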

\begin{proof}
The ring structure on $R$ is coming from the tensor product $\mathcal L_{N_1}\otimes_{\mathcal A(\qp^2_q)} \mathcal L_{N_2} \simeq \mathcal L_{N_1+N_2}$. The following discussion shows that $H^0(\mathcal L_1,\nabla_1^{\delb})=\mathbb Cz_1
\oplus\mathbb Cz_2\oplus\mathbb Cz_3$. In order to do this, we will give an explicit formula for the basis elements of $H^0(\mathcal L_N,\nabla _N^{\delb})$, $t(0,N)^0_{\underline j}$.

Let us look at the computation more closely.
Using (\ref{t basis}), we will see that
\begin{align}
t(0,N)^{\underline 0}_{\underline j}=[j_2+1]\sqrt{\frac{[\frac{j_2}{2}+m]![N-j_2]!}{[\frac {j_2}{2}-m]![N]!}}F_1^{1/2j_2-m} F_2^{j_2}\triangleright z_3^N.
\end{align}
By induction it is not difficult to prove that
$F_2\triangleright z_3^N=q^{-\frac{N-1}{2}}[N]z_2z_3^{N-1}$. Therefore
\begin{align}
F_2^j\triangleright z_3^N&=q^{-\frac{N-1}{2}-\frac{N-2}{2}-...-\frac{N-j}{2}+\frac{1}{2}+...+\frac{j-1}{2}}
[N]...[N-j+1]z_2^jz_3^{N-j}\nonumber\\&=
q^{\frac{j^2}{2}-\frac{jN}{2}}\frac{[N]!}{[N-j]!}z_2^jz_3^{N-j}.\nonumber
\end{align}
The same method gives
\begin{align}
F_1^r\triangleright z^j=q^{\frac{r^2}{2}-\frac{rj}{2}}\frac{[j]!}{[j-r]!}z_1^rz_2^{j-r}.\nonumber
\end{align}
So
\begin{align}F_1^rF_2^j\triangleright z_3^N=q^{\frac{j^2}{2}-\frac{jN}{2}+\frac{r^2}{2}-\frac{rj}{2}}
\frac{[N]!}{[N-j]!}\frac{[j]!}{[j-r]!}z_1^rz_2^{j-r}z_3^{N-j}.\nonumber
\end{align}
Replacing
$j$ with $j_2$ and $r$ with $1/2j_2-m$, we will have
\begin{eqnarray}
t(0,N)^{\underline 0}_{\underline j}=[j_2+1]!\sqrt{\frac{[N]!}{[\frac{j_2}{2}-m]![\frac{j_2}{2}+m]![N-j_2]!}}
q^{\alpha}
z_1^{1/2j_2-m}z_2^
{1/2j_2+m}z_3^{N-j_2},\nonumber
\end{eqnarray}
where $\alpha=-\frac{{j_2N}}{2}-{(\frac{j_2}{2}-m)}\frac {j_2}{2}+\frac{j_2^2}{2}+
\frac{1}{2}{(\frac {j_2}{2}-m)^2}$.

In the case $N=1$, $t(0,1)^{\underline 0}_{0,1,-\frac{1}{2}}=[2]z_1$, $t(0,1)^{\underline 0}_{0,1,\frac{1}{2}}=q[2]z_2$ and $t(0,1)^{\underline 0}_{\underline 0}=z_3$.
Now the isomorphism follows from the identities
$z_i\otimes_{\mathcal A(\qp^2_q)}z_j-qz_j\otimes_{\mathcal A(\qp^2_q)}z_i=0$ in $\mathcal L_2$, which can easily be seen.\\
\end{proof}

\section{The $C^*$-algebras $C(SU_q(3))$ and $C(\qp^2_q)$ }\label{holcts}
In this section we extend   the results of Proposition (\ref{holfunprop}) and Theorem (\ref{holsectthm}) which are stated for polynomial functions and polynomial sections to $L^2$-functions and sections, respectively.

Let $C(SU_q(3))$ denotes the $C^*$ completion of $\mathcal A(SU_q(3))$, i.e. the universal $C^*$-algebra generated by the elements $u^i_j$ subject to the relations given in section \ref{qgsuq3}. There exists a unique left invariant normalized Haar state on this compact quantum group denoted by $h$. The functional $h$ is faithful and it also has a twisted tracial property which will be considered in the next section. If we denote the Hilbert space of completion of $\mathcal A(SU_q(3))$ with respect to the inner product $\langle a,\,b\rangle:=h(a^*b)$ by $L^2(SU_q(3))$. Since the Haar state on the $C^*$-algebra $C(S U_q(3))$ is faithfull \cite{N}, the GNS map $\eta:C(SU_q(3))\rightarrow L^2(SU_q(3))$ will be injective. An orthogonal basis of $L^2(SU_q(3))$, would be $\eta(t(n_1,n_2)^{\underline l}_{\underline j})$.

Using  the fact that $L^2(SU_q(3))$ is the completion of $\bigoplus_{(n_1,n_2)\in \mathbb{N}^2}V(n_1,n_2)\otimes V(n_1,n_2)$, the action of $U_q(\mathfrak{su}(3))$ naturally rises to an action on $L^2(SU_q(3))$. The invariant subalgebra of $C(SU_q(3))$ under the action of $U_q(\mathfrak{u}(2))$ is by definition the $C^*$-algebra  $C(\qp^2_q)$. By the  invariance property of the Haar state,  the  GNS map restricts  to an injective map  $C(\qp^2_q)\rightarrow L^2(SU_q(3))^{U_q(\mathfrak{u}(2))}$.
The space of continuous sections and $L^2$-sections can be defined as well.
\begin{align*}
\Gamma(\mathcal L_N):&=\{\xi\in C(SU_q(3))|\xi\triangleleft k=\epsilon(k)\xi,\,\xi\triangleleft K_1K_2^2=q^N\xi, \quad\forall k\in U_q(\mathfrak{u}(2))\}\\
L^2(\mathcal L_N):&=\{\xi\in L^2(SU_q(3))|\xi\triangleleft k=\epsilon(k)\xi,\,\xi\triangleleft K_1K_2^2=q^N\xi, \quad\forall k\in U_q(\mathfrak{u}(2))\}\\
&=\text{Span}\{t(n,n+N)^{\underline 0}_{\underline j}|\,\,n\in \mathbb{N},\, \,\, \underline j\, \text{satisfies}\, (\ref{j indices})\,\}^{closure}
\end{align*}
Note the the last equality is for $N\geq 0$. For $N<0$, basis elements are of the form  $t(n-N,n)^{\underline 0}_{\underline j}$.

The operator $Z=\triangleleft(F_2F_1,F_2)$ is unbouded on $L^2(SU_q(3))$, so we have to specify the domain of this operator.
\begin{equation*}
\text{Dom} (Z):=\{a \in L^2(SU_q(3))|\,(a\triangleleft F_2F_1,a\triangleleft F_2)\in L^2(SU_q(3)^2)\}.
\end{equation*} 
Now the Proposition \ref{holfunprop} can easily be generalized to the following proposition.
\begin{proposition}
The Kernel of the  map $Z$ restricted to $L^2(\qp^2_q)$ is $\mathbb{C}$.
\end{proposition}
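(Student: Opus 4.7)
The plan is to reduce the $L^2$ statement to the same coefficient-wise computation used in Proposition \ref{holfunprop}, with the extra care needed to pass from finite polynomial sums to norm-convergent series.

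First, I would expand an arbitrary $\xi \in L^2(\qp^2_q)$ in the Peter--Weyl orthogonal basis as
\begin{equation*}
\xi = \sum_{n,\,\underline{j}} \lambda_{n,\underline{j}}\, t(n,n)^{\underline 0}_{\underline j}, \qquad \sum_{n,\,\underline{j}} |\lambda_{n,\underline{j}}|^2 \|t(n,n)^{\underline 0}_{\underline j}\|^2 < \infty,
\end{equation*}
with $\underline j$ ranging over the indices allowed by (\ref{j indices}). The assumption $\xi \in \operatorname{Dom}(Z)$ says that the two series obtained by formal action of $F_2$ and $F_2F_1$ converge in $L^2(SU_q(3))$, so the identity
\begin{equation*}
t(n,n)^{\underline 0}_{\underline j} \triangleleft F_2 = \gamma_n\, t(n,n)^{1,0,-\tfrac12}_{\underline j},\qquad \gamma_n = \sqrt{\tfrac{[n][n+2]}{[2]}},
\end{equation*}
established in the proof of Proposition \ref{holfunprop} via (\ref{Q iso}), (\ref{action on basis}) and (\ref{Aij Bij}), extends by continuity to give
\begin{equation*}
\xi \triangleleft F_2 = \sum_{n,\underline j} \lambda_{n,\underline j}\, \gamma_n\, t(n,n)^{1,0,-\tfrac12}_{\underline j}
\end{equation*}
as an element of $L^2(SU_q(3))$.

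Next I would use that the family $\{t(n,n)^{1,0,-1/2}_{\underline j}\}$ is part of the orthogonal Peter--Weyl basis of $L^2(SU_q(3))$ (distinct basis elements lie in different matrix entries of inequivalent corepresentations, or in orthogonal slots of the same corepresentation). Therefore $Z\xi = 0$, and in particular the second component $\xi \triangleleft F_2 = 0$, forces $\lambda_{n,\underline j}\,\gamma_n = 0$ for every $n$ and $\underline j$. Since $\gamma_n \neq 0$ exactly when $n \geq 1$, this kills every coefficient with $n\geq 1$. For $n=0$, the constraints in (\ref{j indices}) leave only the single index $\underline j = \underline 0$, so $\xi = \lambda_{0,\underline 0}\,t(0,0)^{\underline 0}_{\underline 0} \in \mathbb{C}$.

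The only real obstacle is a soft analytic one: justifying that the formula for $\xi\triangleleft F_2$ can be applied term by term to the infinite Peter--Weyl expansion. This is handled by the definition of $\operatorname{Dom}(Z)$, which guarantees that the image series converges in $L^2$, together with the fact that $\triangleleft F_2$ acts diagonally on Peter--Weyl components (and hence is continuous when restricted to its domain), so partial sums of the expansion of $\xi$ map to partial sums of the expansion of $\xi \triangleleft F_2$. The vanishing of the coefficients then follows from $L^2$-orthogonality exactly as in the polynomial case, and the condition $\xi\triangleleft F_2F_1 = 0$ imposes no further restriction, matching what was already observed in Proposition \ref{holfunprop}.
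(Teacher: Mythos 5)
Your proof is correct and follows essentially the same route as the paper, which simply observes that every element of $L^2(\qp^2_q)$ is an $L^2$-linear combination of the $t(n,n)^{\underline 0}_{\underline j}$ and then repeats the coefficient-wise argument of Proposition \ref{holfunprop}. The extra care you take with $\operatorname{Dom}(Z)$, term-by-term action of $\triangleleft F_2$, and Peter--Weyl orthogonality just makes explicit the convergence details the paper leaves implicit.
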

\begin{proof}
Since any element of $L^2(\qp^2_q)$ is a $L^2$-linear combination of the elements $t(n,n)^{\underline 0}_{\underline j}$, proof is exactly like Proposition \ref{holfunprop}.
\end{proof}
Let us  define $\text{Dom} (\delb):=\{a\in C(\qp^2_q)| \,||\delb a||<\infty\}$. The above statement could pass to continuous functions as follows.
\begin{corollary}
There is no non-constant holomorphic  function in $C(\qp^2_q)$. 
\end{corollary}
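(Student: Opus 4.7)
The plan is to bootstrap the $L^2$-version (the preceding proposition) to the $C^*$-level by passing through the injective GNS map $\eta: C(\qp^2_q)\hookrightarrow L^2(\qp^2_q)$. Suppose $a\in \text{Dom}(\delb)\subseteq C(\qp^2_q)$ satisfies $\delb a=0$. Because $\delb a=(a\triangleleft F_2F_1,\, a\triangleleft F_2)^t$ up to a multiplicative constant, and because the GNS map intertwines the right action of $U_q(\mathfrak{su}(3))$ on its natural dense domain (this is exactly how $Z$ was defined on $L^2(SU_q(3))$ in the first place), the vector $\eta(a)$ lies in $\text{Dom}(Z)$ and satisfies $Z\eta(a)=0$.

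Next, since $a\in C(\qp^2_q)$ is $U_q(\mathfrak{u}(2))$-invariant, its image $\eta(a)$ lies in $L^2(\qp^2_q)$. The previous proposition then gives $\eta(a)\in \ker Z|_{L^2(\qp^2_q)}=\mathbb{C}$. Injectivity of $\eta$, which follows from faithfulness of the Haar state on $C(SU_q(3))$, now forces $a$ itself to be a scalar multiple of the unit in $C(\qp^2_q)$.

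The only step that is not bookkeeping is the compatibility assertion $\eta(a\triangleleft h)=\eta(a)\triangleleft h$ used to translate $\delb a=0$ into $Z\eta(a)=0$. For $a$ in the polynomial algebra this is tautological from the Peter--Weyl description of $L^2(SU_q(3))$, and the hypothesis $a\in\text{Dom}(\delb)$, i.e.\ $\|\delb a\|<\infty$, is precisely what is needed to extend the identity by continuity to the $C^*$-completion: approximating $a$ by a sequence of polynomials whose $\delb$-images converge in the $L^2$-norm, the two sides of the intertwining identity converge simultaneously and pin $\eta(a)$ inside $\text{Dom}(Z)$ with the expected image. I expect this continuity/closability step to be the main technical point; once it is in place, the corollary is immediate from the proposition combined with the injectivity of $\eta$.
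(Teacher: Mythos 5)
Your argument is correct and is essentially the route the paper intends: the corollary is deduced from the $L^2$-proposition by viewing $C(\qp^2_q)$ inside $L^2(\qp^2_q)$ via the GNS map, whose injectivity (from faithfulness of the Haar state) forces a continuous function killed by $\delb$ to be the scalar already identified in $\ker Z|_{L^2(\qp^2_q)}=\mathbb{C}$. Your extra care about the intertwining $\eta(a\triangleleft h)=\eta(a)\triangleleft h$ and the domain condition $\|\delb a\|<\infty$ fills in a step the paper leaves implicit, but it is the same proof.
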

With a similar discussion, the analog of \ref{holsectthm} continues to hold if we work with $L^2$-sections of $\mathcal L_N$. We give the statement of the theorem and leave its  similar proof to the reader.
\begin{theorem}
Let N be a positive integer. Then
\begin{eqnarray}
(1) &&\qquad H^0(L^2(\mathcal L_N),\nabla_N^{\delb})\simeq\mathbb C^{\frac{(N+1)(N+2)}{2}},\nonumber\\
(2) &&\qquad H^0(L^2(\mathcal L_{-N}),\nabla_{-N}^{\delb})=0.\nonumber
\end{eqnarray}
\end{theorem}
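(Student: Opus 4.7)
The plan is to mirror the proof of Theorem \ref{holsectthm}, substituting the $L^2$ expansion of a section for the algebraic expansion and invoking the orthogonality of the Peter--Weyl basis to extract the same conclusions. First I would observe that the identity
\[
\nabla_N^{\delb}\xi \;=\; q^{-N/2}\bigl(\xi\triangleleft F_2F_1,\;\xi\triangleleft F_2\bigr)^{t}
\]
derived in the proof of Theorem \ref{holsectthm} extends to every $\xi\in L^2(\mathcal L_N)\cap\mathrm{Dom}(Z)$: its derivation used only the twisted Leibniz rule for the right action of $U_q(\mathfrak{su}(3))$ together with the identities $\Psi_N^\dagger\Psi_N=1$, $\Psi_N^\dagger\triangleleft F_2=0$, $\Psi_N^\dagger(\Psi_N\triangleleft F_2)=0$ and (\ref{psi equalities 2}), all of which involve only the polynomial entries of $\Psi_N,\Psi_N^\dagger$ and therefore act by bounded multiplication on $L^2(SU_q(3))^2$. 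Consequently the condition $\nabla_N^{\delb}\xi=0$ in $L^2$ is equivalent to the pair of $L^2$-equations $\xi\triangleleft F_2=0$ and $\xi\triangleleft F_2F_1=0$.

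Next, expand $\xi\in L^2(\mathcal L_N)$ in the orthogonal Peter--Weyl basis
\[
\xi=\sum_{n,\underline j}\lambda_{n,\underline j}\,t(n,n+N)^{\underline 0}_{\underline j}.
\]
The calculation from Theorem \ref{holsectthm}, using (\ref{Q iso}), (\ref{action on basis}) and (\ref{Aij Bij}), gives for each basis vector
\[
t(n,n+N)^{\underline 0}_{\underline j}\triangleleft F_2 \;=\; \gamma_n\, t(n,n+N)^{1,0,-1/2}_{\underline j},\qquad \gamma_n=\sqrt{\tfrac{[n][n+N+2]}{[2]}}.
\]
By continuity of $\triangleleft F_2$ on its domain, $\xi\triangleleft F_2=\sum \lambda_{n,\underline j}\gamma_n\,t(n,n+N)^{1,0,-1/2}_{\underline j}$ with $L^2$-convergence, and orthogonality of the basis elements $t(n,n+N)^{1,0,-1/2}_{\underline j}$ in $L^2(SU_q(3))$ forces $\lambda_{n,\underline j}\gamma_n=0$ for every $(n,\underline j)$. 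Since $\gamma_n\neq 0$ for $n\geq 1$, we obtain $\lambda_{n,\underline j}=0$ whenever $n\geq 1$.

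Every $\xi$ in the kernel is therefore a finite linear combination of the polynomial vectors $t(0,N)^{\underline 0}_{\underline j}$, and counting the admissible multi-indices ($j_1=0$, $0\leq j_2\leq N$, $m$ with $\tfrac{j_2}{2}-|m|\in\mathbb N$, so $j_2+1$ choices of $m$ for each $j_2$) yields $\sum_{j_2=0}^{N}(j_2+1)=\tfrac{(N+1)(N+2)}{2}$, proving part (1). For part (2) the analogous computation with basis vectors $t(n-N,n)^{\underline 0}_{\underline j}$ produces $\gamma_n=\sqrt{[n-N][n+2]/[2]}$, which is strictly positive for every $n\in\mathbb N$ when $N<0$; every coefficient must vanish and $H^0(L^2(\mathcal L_{-N}),\nabla_{-N}^{\delb})=0$.

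The only genuine subtlety, and the step I would be most careful about, is justifying that $\triangleleft F_2$ acts termwise on the $L^2$-series for $\xi$ and that the passage from ``$\xi\triangleleft F_2=0$ in $L^2$'' to ``$\lambda_{n,\underline j}\gamma_n=0$ for every $(n,\underline j)$'' is legitimate on $\mathrm{Dom}(Z)$; once this domain issue is settled, the remainder of the argument is a verbatim transcription of the polynomial proof, so it is reasonable (as the authors do) to leave the details to the reader.
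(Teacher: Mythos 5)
Your proposal is correct and is essentially the paper's own argument: the authors explicitly leave the proof to the reader as a repetition of Theorem \ref{holsectthm}, expanding an $L^2$-section in the orthogonal Peter--Weyl basis $t(n,n+N)^{\underline 0}_{\underline j}$ and using the same coefficients $\gamma_n$ to kill all terms with $n\geq 1$ (respectively all terms when the degree is negative). Your extra care about the domain of $Z$ and termwise action of $\triangleleft F_2$ is a reasonable tightening of the same approach, not a different one.
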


We note that our approach here as well as in \cite{KLS}, is somehow the opposite of the approach adopted in \cite{Av1, Av2} to noncommutative projective spaces. We started with a $C^*$-algebra defined as the quantum homogeneous space of the quantum group $SU_q(3)$ and its natural line bundles, and  endowed them with holomorphic structures. The quantum homogeneous coordinate ring  is then defined as the algebra of holomorphic sections of these line bundles. This ring coincides with the twisted homogeneous ring associated in \cite{Av1, Av2} to the line bundle $\mathcal{O}(1)$ under a suitable twist.

\section{Existence of a twisted positive Hochschild 4-cocycle on $\qp^2_q$}
In \cite{C1},  Section VI.2, Connes shows 
that extremal positive Hochschild cocycles on the algebra of smooth functions on a compact oriented
2-dimensional manifold encode the information needed to define a holomorphic
structure on the surface. There is
a similar result for holomorphic structures on the noncommutative two torus (cf. {\it Loc cit.}).
In particular the positive Hocshchild cocycle is defined via the holomorphic structure
and represents the fundamental cyclic  cocycle. In \cite{KLS} a notion of twisted positive Hochschild cocycle is
 introduced
and a similar result is proved for the  holomorphic structure of $\qp^1_q$.
Although the corresponding problem of characterizing holomorphic
structures on higher dimensional (commutative or noncommutative) manifolds via positive Hochschild cocycles is still
open, nevertheless these results suggest regarding (twisted) positive Hochschild cocycles as a possible
framework for holomorphic noncommutative structures. In this section we
prove an analogous result for $\qp^2_q$.

First we recall the notion of twisted Hochschild and cyclic cohomologies.
Let $\mathcal A$ be an algebra and $\sigma$ an automorphism of $\mathcal A$.
For each $n\geq 0$, $C^n(\mathcal A):=$ Hom$(\mathcal A^{\otimes(n+1)},\mathbb C)$
is the space of \textit{n-cochains} on $\mathcal A$.
Define the space of \textit {twisted Hochschild n-cochains} as
$C^n_{\sigma}(\mathcal A):=$Ker$\{(1-\lambda_{\sigma}^{n+1}):C^n(\mathcal A)\rightarrow C^n(\mathcal A)\}$,
where the \textit{twisted cyclic} map $\lambda_{\sigma}:C^n(\mathcal A)\rightarrow C^n(\mathcal A)$ is defined as
\begin{eqnarray}
(\lambda_{\sigma}\phi)(a_0,a_1,...,a_n)=(-1)^n \phi(\sigma(a_n),a_0,a_1,...,a_{n-1}).\nonumber
\end{eqnarray}

The \textit {twisted Hochschild coboundary} map $b_{\sigma}:C^n(\mathcal A)\rightarrow C^{n+1}(\mathcal A)$ is given by
\begin{align*}
b_{\sigma}\phi(a_0,a_1,...,a_{n+1})=&
\sum_{i=0}^n(-1)^i\phi(a_0,...,a_ia_{i+1},...,a_{n+1})\\
&+(-1)^{n+1}\phi(\sigma(a_{n+1})a_0,...,a_n).\nonumber
\end{align*}
The cohomology of the complex $(C^*_{\sigma}(\mathcal A),b_{\sigma})$ is called
 the \textit
{twisted Hochschild cohomology} of $\mathcal A$.
We also need the notion of \textit {twisted cyclic cohomology} of $\mathcal A$.
It is by definition the cohomology of the complex
$(C^*_{\sigma,\lambda}(\mathcal A),b_{\sigma})$,
where
\begin{eqnarray}
C^n_{\sigma,\lambda}:=Ker\{(1-\lambda):C^n_{\sigma}(\mathcal A)\rightarrow
C^{n+1}_{\sigma}(\mathcal A)\}.\nonumber
\end{eqnarray}

Now we come back to the case of our interest, that is $\qp^2_q$.
Let $\tau$ be the fundamental class on $\mathbb{C}P^2_q$ defined as in \cite{DL} by
\begin{equation}\label{tau}
\tau(a_0,a_1,a_2,a_3,a_4):=-\int_h a_0 \ud a_1\ud a_2\ud a_3\ud a_4\,,\quad\forall a_0, a_1,...,a_4 \in \mathcal A(\qp^2_q).
\end{equation}
Here $h$ stands for the Haar state functional of the quantum group $\mathcal A(SU_q(3))$
which has a twisted tracial property $h(xy)=h(\sigma(y)x)$. Here the algebra automorphism $\sigma$
is defined by
\begin{equation}
\sigma:\mathcal A(SU_q(3))\rightarrow \mathcal A(SU_q(3)), \quad \sigma(x)=K\triangleright x\triangleleft K.\nonumber
\end{equation}
where $K=(K_1K_2)^{-4}$. The map $\sigma$, restricted to the algebra $\mathcal A(\qp^2_q)$ is given by
$\sigma(x)=K\triangleright x$. Non-triviality of $\tau$ has been shown in \cite{DL}. Now we recall the definition of a twisted positive Hochschild cocycle as given in \cite{KLS}.
\begin{definition}
A twisted Hochschild 2n-cocycle $\phi$ on a $\ast$-algebra $\mathcal A$ is said to be
twisted positive if the following map defines a positive sesquilinear form on the vector space $\mathcal A^{\otimes(n+1)}$:
\begin{eqnarray}
\langle a_0\otimes a_1\otimes...\otimes a_n,b_0\otimes b_1\otimes...\otimes b_n\rangle=
\phi(\sigma(b_n^*)a_0,a_1,...,a_n,b_n^*,...,b_1^*).\nonumber
\end{eqnarray}
\end{definition}
We would like to define a twisted Hochschild cocycle $\varphi$ which is cohomologous to $\tau$ and it is positive. For simplicity, we introduce first the maps $\varphi_i$, for $i=1,2$ as follows
\begin{align}\label{phi1,2}
&\varphi_1(a_0,a_1,a_2,a_3,a_4)=-3\int_h a_0\partial a_1 \partial a_2 \delb a_3 \delb a_4\nonumber,\\
&\varphi_2(a_0,a_1,a_2,a_3,a_4)=-3\int_h a_0\delb a_1 \delb a_2 \partial a_3 \partial a_4.
\end{align}
Now we define $\varphi\in C^4(\mathcal A(\qp^2_q))$ by
\begin{equation}\label{phi}
\varphi:=\varphi_1+\varphi_2\,.
\end{equation}

We will need the following simple lemma for future computations.
\begin{lemma}
For any $a_0,a_1,a_2,a_3,a_4,a_5$ $\in \mathcal A(\qp^2_q)$ the following identities hold:
\begin{align*}
\int_ha_0(\partial a_1\partial a_2\delb a_3\delb a_4) a_5=\int_h\sigma(a_5)a_0\partial a_1\partial a_2\delb a_3\delb a_4,\\
\int_ha_0(\delb a_1\delb a_2\partial a_3\partial a_4) a_5=\int_h\sigma(a_5)a_0\delb a_1\delb a_2\partial a_3\partial a_4.
\end{align*}
\end{lemma}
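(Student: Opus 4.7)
Both identities are instances of a single statement: on the top-degree forms $\Omega^{(2,2)}(\qp^2_q)$, the functional $\int_h$ is a $\sigma$-twisted trace in the outer $\mathcal A(\qp^2_q)$ factors, that is, for every $a_0, a_5 \in \mathcal A(\qp^2_q)$ and every $\omega \in \Omega^{(2,2)}$,
\begin{equation*}
\int_h a_0\, \omega\, a_5 \;=\; \int_h \sigma(a_5)\, a_0 \,\omega.
\end{equation*}
Granted this, the two identities of the lemma follow immediately by taking $\omega = \partial a_1\partial a_2\delb a_3\delb a_4$ and $\omega = \delb a_1\delb a_2\partial a_3\partial a_4$ respectively, since both lie in $\Omega^{(2,2)}$. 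The plan is therefore to prove the displayed general statement.

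The first step is to identify $\Omega^{(2,2)}$ as a free rank-one $\mathcal A(\qp^2_q)$-bimodule. Section~\ref{qp2} gives $\Omega^{(0,2)} = \mathcal L_3$, and applying the involution of the calculus (which sends $\Omega^{(p,q)}$ to $\Omega^{(q,p)}$) identifies $\Omega^{(2,0)}$ with $\mathcal L_{-3}$. The wedge product then furnishes a bimodule map
\begin{equation*}
\Omega^{(2,2)} \;\simeq\; \Omega^{(2,0)} \otimes_{\mathcal A(\qp^2_q)} \Omega^{(0,2)} \;\simeq\; \mathcal L_{-3} \otimes_{\mathcal A(\qp^2_q)} \mathcal L_3 \;\longrightarrow\; \mathcal L_0 \;=\; \mathcal A(\qp^2_q),
\end{equation*}
induced by multiplication inside $\mathcal A(S^5_q)$. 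I would verify that this composition is a bimodule isomorphism, so that, after fixing a generating volume form $\omega_0$, every element of $\Omega^{(2,2)}$ can be written uniquely as $f\omega_0$ for a single $f \in \mathcal A(\qp^2_q)$, and no twist arises in moving scalars past $\omega_0$ from either side. Under this identification the integral reduces to the Haar state, $\int_h f \omega_0 = h(f)$, as in \cite{DL} (up to a global normalization which does not affect the identity).

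With these preparations the claim is immediate from the twisted tracial property of the Haar state:
\begin{equation*}
\int_h a_0 \,\omega\, a_5 \;=\; h(a_0\, f\, a_5) \;=\; h\bigl(\sigma(a_5)\, a_0\, f\bigr) \;=\; \int_h \sigma(a_5)\, a_0\, \omega,
\end{equation*}
where we wrote $\omega = f\omega_0$ and applied $h(xy) = h(\sigma(y)x)$ with $x = a_0 f$, $y = a_5$ in the middle step. The main obstacle is the verification in the first step that the bimodule structure on $\Omega^{(2,2)}$ is genuinely untwisted: this requires tracking the $q^{\pm 3}$ factors introduced by the bimodule isomorphisms $\sigma_{\pm 3}$ of Section~\ref{bimod connection} and confirming that they cancel in the composition $\mathcal L_{-3} \otimes \mathcal L_3 \to \mathcal L_0$. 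Classically this merely says that volume forms commute with scalars; in the quantum setting it is a routine $q$-bookkeeping once $\omega_0$ is appropriately normalized.
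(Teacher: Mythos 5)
Your proposal is correct and follows essentially the same route as the paper: both reduce to the fact that $\Omega^{(2,2)}$ is a free rank-one module over $\mathcal A(\qp^2_q)$ with a central volume form, write the 4-form as $f\omega_0$, and conclude via the twisted tracial property $h(xy)=h(\sigma(y)x)$ of the Haar state. The only difference is cosmetic: the paper simply invokes the existence of the central basis element, while you sketch a verification of it through $\mathcal L_{-3}\otimes_{\mathcal A(\qp^2_q)}\mathcal L_{3}\simeq\mathcal L_0$, which is a reasonable way to justify the same asserted fact.
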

\begin{proof}
We give the proof of the first one. The proof for the second equality will be similar.
The space of $\Omega^{(2,2)}$ is a rank one free $\mathcal A(\qp^2_q)$-module. Let $\omega$ be the central basis element for the space of $\Omega^{(2,2)}$
and let $\partial a_1\partial a_2\delb a_3\delb a_4=x\omega$. Then
\begin{align}
\int_ha_0(\partial a_1\partial a_2\delb a_3\delb a_4)a_5-\int_h\sigma(a_5)a_0\partial a_1\partial a_2\delb a_3\delb a_4
=\int_h(a_0 x\omega a_5-\sigma(a_5)a_0x\omega)&\nonumber\\=\int_h(a_0 x a_5\omega-\sigma(a_5)a_0x\omega )&\nonumber\\
=h(a_0 xa_5-\sigma(a_5)a_0x)=0&.\nonumber
\end{align}
The last equality comes from the twisted property of the Haar state.
\end{proof}
\begin{proposition}
The functional $\varphi$ defined by formula (\ref{phi}), is a twisted positive Hochschild 4-cocycle.
\end{proposition}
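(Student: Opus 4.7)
The plan is to verify three properties of $\varphi = \varphi_1 + \varphi_2$: (a) the twisted cyclicity $\lambda_\sigma^5\varphi = \varphi$ needed for $\varphi$ to lie in $C^4_\sigma(\mathcal A(\qp^2_q))$, (b) the cocycle identity $b_\sigma\varphi = 0$, and (c) twisted positivity of the associated sesquilinear form on $\mathcal A(\qp^2_q)^{\otimes 3}$.

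For (a), observe that $\sigma(a) = K\triangleright a$ is implemented by a left action of the group-like element $K = (K_1K_2)^{-4}$, while $\partial$ and $\delb$ are given by right actions of $U_q(\mathfrak{su}(3))$-generators; since left and right actions commute, $\sigma$ commutes with both $\partial$ and $\delb$. Setting $x=1$ in the twisted trace identity $h(xy)=h(\sigma(y)x)$ gives $h\circ\sigma=h$. Together these imply $\varphi_i(\sigma(a_0),\ldots,\sigma(a_4)) = \varphi_i(a_0,\ldots,a_4)$ for $i=1,2$, which is precisely the condition $\lambda_\sigma^5\varphi_i = \varphi_i$.

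For (b), I would write out the six terms of $b_\sigma\varphi_1(a_0,\ldots,a_5)$ and expand each $\partial(a_ia_{i+1})$ or $\delb(a_ia_{i+1})$ in the middle terms by the Leibniz rule. The preceding lemma converts the wrap-around term $-\varphi_1(\sigma(a_5)a_0,a_1,\ldots,a_4)$ into $3\int_h a_0\partial a_1\partial a_2\delb a_3\delb a_4\cdot a_5$. The ten resulting $(2,2)$-form integrands then cancel in five adjacent pairs, each pair consisting of two identical $(2,2)$-forms with opposite signs: the ``outer'' piece from position $i$ matches the ``undifferentiated'' piece of the Leibniz expansion at position $i+1$, with the wrap-around closing the cycle. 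Interchanging $\partial$ and $\delb$ yields $b_\sigma\varphi_2 = 0$, and hence $b_\sigma\varphi = 0$.

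For (c), I expect that after using the relation $\delb(x^*) = -(\partial x)^*$ and applying the preceding lemma to cycle the $\sigma(b_0^*)$-factor past the $(2,2)$-form, the positivity pairing takes the form
\[
\langle a,b\rangle \;=\; -3\!\int_h \omega_a\,\omega_b^{*} \;-\; 3\!\int_h \omega'_a\,(\omega'_b)^{*},
\]
where $\omega_\xi := \xi_0\partial\xi_1\partial\xi_2 \in \Omega^{(2,0)}$ and $\omega'_\xi := \xi_0\delb\xi_1\delb\xi_2 \in \Omega^{(0,2)}$. Each summand is the Haar-state-induced $L^2$-pairing between a form and its adjoint, so positive semi-definiteness reduces to showing $-\int_h \eta\eta^{*}\geq 0$ for $\eta\in\Omega^{(2,0)}\cup\Omega^{(0,2)}$. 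Using the rank-one freeness of $\Omega^{(2,2)}$ over $\mathcal A(\qp^2_q)$ invoked in the preceding lemma, this reduces to positivity of $h$ on a single element of $\mathcal A(\qp^2_q)$, with orientation fixed by the sign convention in (\ref{tau}). I expect this is the main obstacle --- matching sign conventions so that both $L^2$-pairings are genuinely non-negative --- whereas steps (a) and (b) reduce to bookkeeping with the Leibniz rule, the preceding lemma, and the twisted trace property of $h$.
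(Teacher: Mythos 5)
Your steps (a) and (b) are correct and essentially identical to the paper's argument: $\sigma$-invariance of each $\varphi_i$ follows because the group-like element $K$ acts from the left while $\partial$ and $\delb$ are implemented by right actions, combined with invariance of the Haar functional (the paper phrases this as $\int_h K\triangleright(\cdot)=\epsilon(K)\int_h(\cdot)$, which is the same input as your $h\circ\sigma=h$), and $b_\sigma\varphi_i=0$ follows from the Leibniz rule together with the preceding lemma applied to the wrap-around term; your five-pair cancellation is the same bookkeeping the paper performs.

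The gap is in (c). You reduce twisted positivity to the claim that $\pm\int_h\eta\eta^*\geq 0$ for $\eta\in\Omega^{(2,0)}\cup\Omega^{(0,2)}$ and leave open both the sign and the proof of that claim; but this is exactly the substantive content of positivity, and it cannot be settled by rank-one freeness of $\Omega^{(2,2)}$ plus the sign convention in (\ref{tau}) alone. The paper closes it with a concrete computation: writing $\partial a_1=(v_1,v_2)$ and $\partial a_2=(w_1,w_2)$ and invoking the explicit multiplication rule for $(1,0)$-forms (Proposition A.1 of \cite{DL}), one finds
\begin{equation}
(a_0\,\partial a_1\,\partial a_2)(a_0\,\partial a_1\,\partial a_2)^*=c_4^2[2]^{-1}\mu\mu^*,\qquad \mu=q^{1/2}a_0v_1w_2-q^{-1/2}a_0v_2w_1,\nonumber
\end{equation}
and similarly for $(0,1)$-forms with a constant $c_0$, so that the pairing becomes $h(3c^2[2]^{-1}\mu\mu^*)\geq 0$ by positivity of the Haar state. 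Note also that with the graded involution (for two $1$-forms, $(\omega_1\omega_2)^*=-\omega_2^*\omega_1^*$) the diagonal value comes out as $+3\int_h(a_0\partial a_1\partial a_2)(a_0\partial a_1\partial a_2)^*$, i.e.\ with the sign opposite to the one you display; this illustrates that the sign is not a matter of convention-matching that can be deferred, but is produced by the same explicit computation that exhibits the integrand as a manifestly nonnegative element. Until that step (or an equivalent proof that the Haar-induced pairing on $\Omega^{(2,0)}$ and $\Omega^{(0,2)}$ is positive) is supplied, positivity of $\varphi$ is not established.
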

\begin{proof}
We first verify the twisted cocycle property. In order to do so, we consider this property for each $\varphi_i$. We will prove the statement for $\varphi_1$. The proof for $\varphi_2$ is similar.
\begin{align}
&\varphi_1(\sigma(a_0),\sigma(a_1),\sigma(a_2),\sigma(a_3),\sigma(a_4))\nonumber\\
&=-3\int_h \sigma(a_0) \partial \sigma (a_1) \partial \sigma (a_2)
\delb \sigma (a_3) \delb \sigma (a_4)\nonumber\\
&=-3\int_h (K \triangleright a_0) (K \triangleright \partial a_1)
( K \triangleright \partial a_2)(K \triangleright \delb a_3)(K \triangleright \delb a_4)\nonumber\\
&=-3\int_h K \triangleright (a_0\partial a_1 \partial a_2
\delb a_3 \delb a_4)= -3\,\epsilon(K)\int_h a_0\partial a_1
\partial a_2 \delb a_3 \delb a_4\nonumber\\
&=\varphi_1(a_0,a_1,a_2,a_3,a_4)\nonumber.
\end{align}

Now let us prove that $b_{\sigma}\varphi=0$. Again we just prove for  $\varphi_1$ and leave the similar proof of the other one.
\begin{align*}
b_{\sigma}\varphi_1(a_0,a_1,a_2,a_3,a_4,a_5)&=\varphi_1(a_0a_1,a_2,a_3,a_4,a_5)
-\varphi_1(a_0,a_1a_2,a_3,a_4,a_5)\\&+\varphi_1(a_0,a_1,a_2a_3,a_4,a_5)-\varphi_1(a_0,a_1,a_2,a_3a_4,a_5)\\&+\varphi_1(a_0,a_1,a_2,a_3,a_4a_5)-\varphi_1(\sigma(a_5)a_0,a_1,a_2,a_3,a_4)\\
\end{align*}
Using (\ref{phi1,2}), this equals to
\begin{align*}
-3&\int_ha_0a_1\partial a_2\partial a_3\delb a_4\delb a_5+3\int_ha_0\partial(a_1a_2)\partial a_3\delb a_4\delb a_5
\\-3&\int_ha_0\partial a_1\partial (a_2a_3)\delb a_4\delb a_5+3\int_ha_0\partial a_1\partial a_2\delb (a_3a_4)\delb a_5\\
-3&\int_ha_0\partial a_1\partial a_2\delb a_3\delb (a_4a_5)+3\int_h\sigma(a_5)a_0\partial a_1\partial a_2\delb a_3\delb a_4.
\end{align*}
Using the Leibniz property we get
\begin{align*}
b_{\sigma}\varphi_1(a_0,a_1,a_2,a_3
,a_4,a_5)=-3\int_h(a_0a_1\partial a_2\partial a_3\delb a_4\delb a_5-\sigma(a_5)a_0\partial a_1\partial a_2\delb a_3\delb a_4),
\end{align*}
which is zero by the previous lemma.

Now we will show that all $\varphi_1$ and $\varphi_2$ are positive.\\\\
Positivity of $\varphi_1$:
\begin{align}
\varphi_1(\sigma (a_0^*)a_0,a_1,a_2,a_2^*,a_1^*)&=
-3\int_h \sigma(a_0^*)a_0 \partial a_1 \partial a_2 \delb a_2^* \delb a_1^*\nonumber\\
&=-3\int_ha_0 \partial a_1 \partial a_2 \delb a_2^* \delb a_1^*a_0^*\nonumber\\
&=3\int_h(a_0 \partial a_1 \partial a_2)(a_0 \partial a_1 \partial a_2)^*.\nonumber
\end{align}
One can take $\partial a_1=(v_1,v_2)$ and $\partial a_2 =(w_1,w_2)$, then using the multiplication rule of type (1,0) forms (c.f. \cite{DL} Proposition A.1), we find that\\
$(a_0 \partial a_1 \partial a_2)(a_0 \partial a_1 \partial a_2)^*=c_4^2[2]^{-1}\mu\mu^*$,
where $\mu=q^{1/2}a_0 v_1w_2-q^{-1/2}a_0v_2w_1$. Hence
\begin{equation}
\varphi_1(\sigma (a_0^*)a_0,a_1,a_2,a_2^*,a_1^*)=h(3c_4^2[2]^{-1}\mu\mu^*)\geq 0.\nonumber
\end{equation}
\\\ Positivity of $\varphi_2$:
\begin{align}
\varphi_2(\sigma (a_0^*)a_0,a_1,a_2,a_2^*,a_1^*)&=
 -3\int_h \sigma(a_0^*)a_0 \delb a_1 \delb a_2 \partial a_2^* \partial a_1^*\nonumber\\
&=-3\int_ha_0 \delb a_1 \delb a_2 \partial a_2^* \partial a_1^*a_0^*\nonumber\\
&=3\int_h(a_0 \delb a_1 \delb a_2)(a_0 \delb a_1 \delb a_2)^*.\nonumber
\end{align}

Similar to the above discussion, one can take $\delb a_1=(v_1,v_2)$ and $\delb a_2 =(w_1,w_2)$ and use the multiplication of type (0,1) forms to find that
\begin{align}
\varphi_2(\sigma (a_0^*)a_0,a_1,a_2,a_2^*,a_1^*)=
h(3c_0^2[2]^{-1}\nu\nu^*)\geq 0,\nonumber
\end{align}
where $\nu=q^{1/2}a_0 v_1w_2-q^{-1/2}a_0v_2w_1$.
Here $c_0$ and $c_4$ are two real constants.
This concludes the positivity of $\varphi$.\\
\end{proof}
Now we want to show that the twisted Hochschild cocycle $\varphi$ as defined by formula (\ref{phi}) and the twisted cyclic cocycle $\tau$ as in formula (\ref{tau})
 are cohomologous. To this end, we need an appropriate twisted Hochschild cocycle
$\psi$ such that $\tau-\varphi=b_{\sigma}\psi$.
Let $\psi_i$ for , $i$=1,2,3,4 be defined by
\begin{align}
\psi_1(a_0,a_1,a_2,a_3)=
-&\int_h a_0\partial{a_1}\overline{\partial}{a_2}\partial\overline{\partial}a_3,\nonumber\\
\psi_2(a_0,a_1,a_2,a_3)=2&\int_ha_0\partial{a_1}\partial\overline{\partial}{a_2}\delb a_3\nonumber,\\
\psi_3(a_0,a_1,a_2,a_3)=2&\int_h a_0\delb{a_1}\delb\partial{a_2}\partial{a_3},\nonumber\\
\psi_4(a_0,a_1,a_2,a_3)=
-&\int_h a_0\delb{a_1}\partial{a_2}\delb\partial{a_3}.\nonumber
\end{align}
and let $\psi=\sum_{i=1}^4 \psi_i$. Then we will have the following result.
\begin{proposition}
The twisted Hochschild cocycles $\tau$ and $\varphi$ are cohomologous.
\end{proposition}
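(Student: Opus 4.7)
The plan is to reduce both sides of $\tau-\varphi=b_\sigma\psi$ to linear combinations of a common basis of $(2,2)$-form integrals and then match coefficients. The key structural fact is that $\Omega^{(p,q)}(\qp^2_q)$ vanishes whenever $p>2$ or $q>2$, so when $\ud=\partial+\delb$ is expanded in $\tau(a_0,\ldots,a_4)=-\int_h a_0\,\ud a_1\,\ud a_2\,\ud a_3\,\ud a_4$ only the $\binom{4}{2}=6$ orderings of differentials with exactly two $\partial$'s and two $\delb$'s survive. Since $\varphi_1$ and $\varphi_2$ account for the two ``pure'' orderings $\partial\partial\delb\delb$ and $\delb\delb\partial\partial$ with coefficient $-3$ each, the difference $\tau-\varphi$ equals $\int_h a_0[\cdots]$ in which the two pure orderings carry weight $+2$ and each of the four ``mixed'' orderings carries weight $-1$.

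The second step is to compute each $b_\sigma\psi_i$ by expanding the five summands
\[
b_\sigma\psi_i(a_0,\ldots,a_4)=\sum_{j=0}^{3}(-1)^j\psi_i(a_0,\ldots,a_ja_{j+1},\ldots,a_4)+\psi_i(\sigma(a_4)a_0,a_1,a_2,a_3)
\]
via the graded Leibniz rule, invoking the identity $\partial\delb=-\delb\partial$ (from $\ud^2=0$, since $\partial^2=\delb^2=0$ separately by bidegree) so that $\partial\delb(ab)=\partial\delb a\cdot b-\delb a\,\partial b+\partial a\,\delb b+a\,\partial\delb b$. Most terms telescope: the pieces containing an undifferentiated factor $a_j$ cancel between consecutive $(-1)^j$-summands, and the residual term of the form $-\int_h a_0(\cdots)\cdot a_4$ is converted by the twisted tracial identity of the preceding lemma into $-\int_h\sigma(a_4)a_0(\cdots)$, thus cancelling the boundary contribution $+\psi_i(\sigma(a_4)a_0,a_1,a_2,a_3)$. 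The tracial lemma extends from the two specific $(2,2)$-forms stated there to any $(2,2)$-form, because $\Omega^{(2,2)}$ is a rank-one free $\mathcal A(\qp^2_q)$-module with central basis element.

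Carrying this out, $b_\sigma\psi_1$ contributes weight $-1$ to the mixed ordering $\partial\delb\delb\partial$ and $+1$ to $\partial\delb\partial\delb$; $b_\sigma\psi_4$ contributes $-1$ to $\delb\partial\partial\delb$ and $+1$ to $\delb\partial\delb\partial$; $b_\sigma\psi_2$ contributes $+2$ to $\partial\partial\delb\delb$ and $-2$ to $\partial\delb\partial\delb$; and $b_\sigma\psi_3$ contributes $+2$ to $\delb\delb\partial\partial$ and $-2$ to $\delb\partial\delb\partial$. Summing these, the two pure orderings acquire total weight $+2$ and each of the four mixed orderings acquires total weight $-1$, matching $\tau-\varphi$. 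The main obstacle is purely bookkeeping: sign tracking from the graded Leibniz rule applied to the second-order operator $\partial\delb$, and verification that the coefficients $(-1,2,2,-1)$ on $(\psi_1,\psi_2,\psi_3,\psi_4)$ are exactly those forced by matching these weights.
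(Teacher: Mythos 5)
Your proposal is correct and follows essentially the same route as the paper: you compute each $b_\sigma\psi_i$ via the graded Leibniz rule (with the twisted tracial property of $\int_h$ absorbing the $\sigma(a_4)$ boundary term), and then match the resulting coefficients against the bidegree-$(2,2)$ expansion of $\tau-\varphi$, obtaining exactly the weights $+2$ on the pure orderings and $-1$ on the mixed ones as in the paper's equation for $b_\sigma\psi$. Your explicit remarks that only the six $(2,2)$-orderings survive in $\tau$ and that the tracial lemma extends to arbitrary $(2,2)$-forms are points the paper leaves implicit, but they do not change the argument.
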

\begin{proof}
\begin{align}
b_{\sigma}\psi_1(a_0,a_1,a_2,a_3,a_4) & =\psi_1(a_0a_1,a_2,a_3,a_4)
-\psi_1(a_0,a_1a_2,a_3,a_4)\nonumber\\
 & +\psi_1(a_0,a_1,a_2a_3,a_4)
-\psi_1(a_0,a_1,a_2,a_3a_4)\nonumber\\
 & +\psi_1(\sigma(a_4)a_0,a_1,a_2,a_3)\nonumber
\end{align}
which equals to
\begin{align}
-\int_h \{ & a_0 a_1\partial a_2 \delb a_3 \partial\delb a_4
-a_0\partial(a_1a_2)\delb a_3 \partial \delb a_4 +
a_0\partial a_1\delb (a_2a_3) \partial\delb a_4\nonumber\\
 & -a_0\partial a_1\delb a_2 \partial\delb (a_3a_4) +
\sigma(a_4)a_0\partial a_1\delb a_2 \partial\delb a_3\}.\nonumber
\end{align}

Applying the Leibniz rule, one can see that in the expanded form, all but two terms will cancel. That is
\begin{align}
b_{\sigma}\psi_1 & =\int_h a_0(\partial a_1 \delb a_2 \partial a_3 \delb a_4 - \partial a_1 \delb a_2 \delb a_3 \partial a_4). \nonumber
\end{align}
Similar computation for $\psi_i$, $i=2,3$ and $4$ shows that
\begin{align}
b_{\sigma}\psi_2 & =2\int_h a_0(\partial a_1 \partial a_2 \delb a_3 \delb a_4 - \partial a_1 \delb a_2 \partial a_3 \delb a_4) \nonumber,\\
b_{\sigma}\psi_3 & =2\int_h a_0(\delb a_1 \delb a_2 \partial a_3 \partial a_4 - \delb a_1 \partial a_2 \delb a_3 \partial {a_4}) \nonumber,\\
b_{\sigma}\psi_4 & =\int_h a_0(\delb a_1 \partial a_2 \delb a_3 \partial a_4 - \delb a_1 \partial a_2 \partial a_3 \delb a_4). \nonumber
\end{align}
Therefore
\begin{align}\label{bpsi}
b_{\sigma}\psi=
2&\int_h a_0(\partial a_1 \partial a_2 \delb a_3 \delb a_4+\delb a_1 \delb a_2 \partial a_3 \partial a_4) \nonumber\\
-&\int_h a_0(\delb a_1 \partial a_2 \partial a_3 \delb a_4 +\partial a_1 \delb a_2 \delb a_3 \partial a_4)\nonumber\\
-&\int_ha_0(\delb a_1 \partial a_2 \delb a_3 \partial a_4+\partial a_1 \delb a_2 \partial a_3 \delb a_4).
\end{align}
Now from (\ref{tau}), (\ref{phi}) and (\ref{bpsi}), we can easily find that $\tau-\varphi=b_{\sigma}\psi$.
\end{proof}
\subsection*{Acknowledgments}
The first author would like to thank Giovanni Landi and Walter
van Suijlekom for many discussions on the topic of this paper as well the collaboration in \cite{KLS} which is
naturally continued in  the present work.  The second author is much obliged and thankful to
Francesco D'Andrea for kindly and promptly  answering many questions
about the subject of \cite{DL,DDL}.

Department of Mathematics, University of Western Ontario, London, Ontario, N6A5B7, Canada.

\textit{Email}: masoud@\,uwo.ca\\\\
Department of Mathematics, University of Western Ontario, London, Ontario, N6A5B7, Canada.

\textit{Email}: amotadel@\,uwo.ca
\end{document}